\documentclass{amsart}
\usepackage{bbding}
\usepackage{amssymb}
\usepackage{amsmath}

\newtheorem{theorem}{Theorem}[section]

\newtheorem{proposition}[theorem]{Proposition}
\newtheorem{corollary}[theorem]{Corollary}
\theoremstyle{definition}
\newtheorem{definition}[theorem]{Definition}
\newtheorem{example}[theorem]{Example}

\newtheorem{remark}[theorem]{Remark}

\newcommand{\blankbox}[2]

\begin{document}
\title{Central extensions and conformal derivations of a class of Lie conformal algebras}

\author{Yanyong Hong}
\address{Department of Mathematics, Hangzhou Normal University,
Hangzhou, 311121, P.R.China}
\email{hongyanyong2008@yahoo.com}

\subjclass[2010]{17B40, 17B65, 17B68, 17B69}
\keywords{Lie conformal algebra, Gel'fand-Dorfman bialgebra, conformal derivation, central extension}
\thanks{Project supported by the Scientific Research Foundation of Hangzhou Normal University (No. 2019QDL012) and the National Natural Science Foundation of China (No.11501515, 11871421)}
\begin{abstract}
A quadratic Lie conformal algebra corresponds to a Hamiltonian pair in \cite{GD}, which plays fundamental roles in completely integrable systems. Moreover, it also corresponds to certain compatible pairs of a Lie algebra and a Novikov algebra which was called Gel'fand-Dorfman bialgebra by Xu in \cite{X1}.
In this paper, central extensions and conformal derivations of quadratic Lie conformal algebras are studied in terms of Gel'fand-Dorfman bialgebras. It is shown that central extensions and conformal derivations of a quadratic Lie conformal algebra are related with some bilinear forms and some operators of the corresponding Gel'fand-Dorfman bialgebra respectively.

\end{abstract}

\maketitle

\section{Introduction}
The notion of vertex algebra was formulated by R.Borcherds
in \cite{Bo}, which is a rigorous mathematical definition of the
chiral part of a 2-dimensional quantum field theory studied
intensively by physicists since the landmark paper \cite{BPZ}. Lie
conformal algebra introduced by V.Kac in \cite{K1,K2} is an
axiomatic description of the operator product expansion (or rather
its Fourier transform) of chiral fields in conformal field theory.
In a way, its relationship with vertex algebra is like the relationship between Lie algebra and  its universal enveloping algebra.
It turns out to be a valuable tool in studying vertex algebras.
In addition, Lie conformal algebras have close connections to Hamiltonian formalism in the theory of nonlinear evolution equations (see the book \cite{Do} and references therein, and also \cite{BDK, GD, Z, X4} and many other papers).

Moreover, in the purely algebraic viewpoint, conformal algebras
are quite intriguing subjects. One can define the conformal analogue
of a variety of ``usual" algebras such as Lie conformal algebras,
associative conformal algebras, etc. The theory of some variety of
conformal algebras sheds new light on the problem of classification
of infinite-dimensional algebras of the corresponding ``classical"
variety. In particular, Lie conformal algebras give us powerful tools for the study of infinite-dimensional Lie algebras satisfying the locality property in \cite{K}.

The topic of this paper is about Lie conformal algebra. Simple (or semisimple) Lie conformal algebras have been intensively investigated.
A complete classification of finite simple (or semisimple) Lie conformal algebras was given in \cite{DK1},
all finite irreducible conformal modules of finite simple (or semisimple) Lie conformal algebras were classified in \cite{CK1}, extensions of these conformal modules were studied in \cite{CK2}
and cohomology groups of finite simple Lie conformal algebras with some conformal modules
were characterized in \cite{BKV}. Recently, some non-simple Lie conformal algebras were introduced and their structures including central extensions, conformal derivations were studied. For example, two new nonsimple Lie conformal algebras were studied in \cite{SY} which are obtained from two formal distribution Lie algebras,
i.e. Schr\"{o}dinger-Virasoro Lie algebra and the extended Schr\"{o}dinger-Virasoro Lie algebra. Similarly, the Lie conformal algebras of the loop Virasoro Lie algebra and a Block type Lie algebra were studied in \cite{WCY} and \cite{GXY} respectively. In fact, all these Lie conformal algebras including semisimple Lie conformal algebras are quadratic Lie conformal algebras named by Xu in \cite{X1}.
A quadratic Lie conformal algebra corresponds to a Hamiltonian pair in \cite{GD}, which plays fundamental roles in completely integrable systems. Moreover, it also corresponds to certain compatible pairs of a Lie algebra and a Novikov algebra which was called Gel'fand-Dorfman bialgebra by Xu in \cite{X1}.

As we know, the description of finite Lie conformal algebras splits into two problems (see \cite{K1}):\\
(1) describe Lie conformal algebras that are free $\mathbb{C}[\partial]$-modules of finite rank;\\
(2) find central extensions of Lie conformal algebras from (1) with center being in torsion.\\
Therefore, it is meaningful to study central extensions of  Lie conformal algebras in the structure theory of Lie conformal algebras. Since most of known Lie conformal algebras are quadratic Lie conformal algebras, in this paper, we plan to study conformal derivations and central extensions of quadratic Lie conformal algebras
by a one-dimensional center $\mathbb{C}\mathfrak{c}_\beta$ where $\partial\mathfrak{c}_\beta =\beta \mathfrak{c}_\beta $ in a unified form. It is shown that central extensions of some quadratic Lie conformal algebras including those corresponding to the simple Novikov algebras or Novikov algebras with a left unit or a right unit  by a one dimensional center  $\mathbb{C}\mathfrak{c}_\beta$ can be directly obtained from some bilinear forms of the corresponding Gel'fand-Dorfman bialgebras satisfying several equalities. Note that this method can be also applied to study central extensions of quadratic Lie conformal algebras
by an abelian Lie conformal algebra $\mathbb{C}[\partial]\mathfrak{c}$ which is free of rank one. In particular, we prove that $H^2(R, \mathbb{C}\mathfrak{c}_\beta)=0$ and $H^2(R, \mathbb{C}[\partial]\mathfrak{c})=0$ when $\beta\neq 0$ and $R$ is the quadratic Lie conformal algebra corresponding to Novikov algebra with a right unit. In addition, we show that conformal derivations of some quadratic Lie conformal algebras including those corresponding to the Novikov algebras with a left unit or with a right unit can be obtained from some operators of the corresponding Gel'fand-Dorfman bialgebras. Moreover, it is proved that all conformal derivations of quadratic Lie conformal algebra corresponding to the Novikov algebra with a unit are inner.

This paper is organized as follows. In Section 2, the definitions of Lie conformal algebra and quadratic Lie conformal algebra are recalled.
In Section 3, central extensions of quadratic Lie conformal algebras by a one-dimensional center $\mathbb{C}\mathfrak{c}_\beta$ where $\partial\mathfrak{c}_\beta =\beta \mathfrak{c}_\beta $ are studied in terms of Gel'fand-Dorfman bialgebras. It is shown that $H^2(R, \mathbb{C}\mathfrak{c}_\beta)=0$ and $H^2(R, \mathbb{C}[\partial])=0$ when $\beta\neq 0$ and $R$ is the quadratic Lie conformal algebra corresponding to Novikov algebra with a right unit. As an application, we characterize central extensions of several specific quadratic Lie conformal algebras by $\mathbb{C}\mathfrak{c}_0$ up to equivalence. In Section 4, we study conformal derivations of quadratic Lie conformal algebras in terms of Gel'fand-Dorfman bialgebras. Moreover,
conformal derivations of several specific quadratic Lie conformal algebras are determined.

Throughout this paper, denote by $\mathbb{C}$ the field of complex
numbers; $\mathbb{N}$ is the set of natural numbers, i.e.
$\mathbb{N}=\{0, 1, 2,\cdots\}$; $\mathbb{Z}$ is the set of integer
numbers; denote by $C_j^i$ the corresponding binomial coefficient when $i$, $j\in \mathbb{N}$ and $j\geq i$. All tensors over $\mathbb{C}$ are denoted by $\otimes$.
Moreover, if $A$ is a vector space, the space of polynomials of $\lambda$ with coefficients in $A$ is denoted by $A[\lambda]$.

\section{Preliminaries}
In this section, we will recall the definitions of Lie conformal algebra and quadratic Lie conformal algebra. These facts can be referred to \cite{K1} and \cite{X1}.
\begin{definition}
A \emph{Lie conformal algebra} $R$ is a $\mathbb{C}[\partial]$-module with a $\lambda$-bracket $[\cdot_\lambda \cdot]$ which defines a $\mathbb{C}$-bilinear
map from $R\times R\rightarrow R[\lambda]$, satisfying
\begin{eqnarray*}
&&[\partial a_\lambda b]=-\lambda [a_\lambda b],~~~[a_\lambda \partial b]=(\lambda+\partial)[a_\lambda b], ~~\text{(conformal sesquilinearity)}\\
&&[a_\lambda b]=-[b_{-\lambda-\partial}a],~~~~\text{(skew-symmetry)}\\
&&[a_\lambda[b_\mu c]]=[[a_\lambda b]_{\lambda+\mu} c]+[b_\mu[a_\lambda c]],~~~~~~\text{(Jacobi identity)}
\end{eqnarray*}
for $a$, $b$, $c\in R$.
\end{definition}
A Lie conformal algebra is said to be \emph{finite} if it is finitely generated  as a $\mathbb{C}[\partial]$-module. Otherwise, it is called \emph{infinite}.

In addition, there is an important Lie algebra associated with a Lie conformal algebra.
Assume that $R$ is a Lie conformal algebra and set $[a_\lambda b]=\sum_{n\in \mathbb{N}}\frac{\lambda^n}{n!}a_{(n)}b$ where $a_{(n)}b\in R$ for any $n\in \mathbb{N}$. Let Coeff$(R)$ be the quotient
of the vector space with basis $a_n$ $(a\in R, n\in\mathbb{Z})$ by
the subspace spanned over $\mathbb{C}$ by
elements:
$$(\alpha a)_n-\alpha a_n,~~(a+b)_n-a_n-b_n,~~(\partial
a)_n+na_{n-1},~~~\text{where}~~a,~~b\in R,~~\alpha\in \mathbb{C},~~n\in
\mathbb{Z}.$$ The operation on Coeff$(R)$ is defined as follows:
\begin{equation}\label{106}
[a_m, b_n]=\sum_{j\in \mathbb{N}}\left(\begin{array}{ccc}
m\\j\end{array}\right)(a_{(j)}b)_{m+n-j}.\end{equation} Then
Coeff$(R)$ is a Lie algebra and it is called the\emph{ coefficient algebra} of $R$ (see \cite{K1}).

Next, let us introduce some examples of Lie conformal algebras.

\begin{example}
The Virasoro Lie conformal algebra $\text{Vir}$ is the simplest nontrivial
example of Lie conformal algebras. It is defined by
$$\text{Vir}=\mathbb{C}[\partial]L, ~~[L_\lambda L]=(\partial+2\lambda)L.$$
Coeff$\text{(Vir)}$ is just the Witt algebra.
\end{example}

\begin{example}

Let $\mathfrak{g}$ be a Lie algebra. The current Lie conformal
algebra associated to $\mathfrak{g}$ is defined by
$$\text{Cur} \mathfrak{g}=\mathbb{C}[\partial]\otimes \mathfrak{g}, ~~[a_\lambda b]=[a,b],
~~a,b\in \mathfrak{g}.$$

Moreover, we can define a semi-direct sum of $\text{Vir}$ and $\text{Cur}
\mathfrak{g}$. The $\mathbb{C}[\partial]$-module $\text{Vir}\oplus \text{Cur}
\mathfrak{g}$ can be given a Lie conformal algebra structure by
$$[L_\lambda L]=(\partial+2\lambda)L,~~~[a_\lambda b]=[a,b],~~[L_\lambda
a]=(\partial+\lambda)a,$$ $L$ being the standard generator of $\text{Vir}$,
$a$, $b\in \mathfrak{g}$.
\end{example}

Then we introduce a class of special Lie conformal algebras named quadratic Lie conformal algebras (see \cite{X1}).
\begin{definition}
$R$ is a \emph{quadratic Lie conformal algebra}, if there exists a vector space $V$ such that $R=\mathbb{C}[\partial]V$ is a Lie conformal algebra as a free
$\mathbb{C}[\partial]$-module and the $\lambda$-bracket is of the following form
$$[a_{\lambda} b]=\partial u+\lambda v+ w,$$
where $a$, $b$, $u$, $v$, $w\in V$.
\end{definition}
\begin{remark}
Obviously, $\text{Vir}$, $\text{Cur}\mathfrak{g}$ and $\text{Vir}\oplus \text{Cur}
\mathfrak{g}$ where $\mathfrak{g}$ is a Lie algebra are quadratic Lie conformal algebras.
Moreover, all Lie conformal algebras introduced in \cite{SY, WCY, GXY} are quadratic Lie conformal algebras.
\end{remark}

For giving an equivalent characterization of quadratic Lie conformal algebras, we first introduce the definitions of Novikov algebra and
Gel'fand-Dorfman bialgebra.
\begin{definition}
A \emph{Novikov algebra} $V$ is a vector space over $\mathbb{C}$ with a bilinear product $\circ: V\times V\rightarrow V$ satisfying (for any $a$, $b$, $c\in V$):
\begin{eqnarray*}
&&(a\circ b)\circ c-a \circ (b\circ c)=(b\circ a)\circ c-b \circ (a\circ c),\\
&&(a\circ b)\circ c=(a\circ c)\circ b.
\end{eqnarray*}

If for any $a\in V$, there exists $x\in V$ such that $x\circ a=a$ or $a\circ x=a$, then $x$ is called a \emph{left unit} or \emph{right unit} of $(V,\circ)$. If an element in $V$ is not only a left unit but also a right unit, then we call it a \emph{unit}. If $I$ is a subspace of a Novikov algebra $(V,\circ)$ and $V\circ I\subset I$, $I\circ V\subset I$, then $I$ is called an \emph{ideal} of $(V,\circ)$. Obviously, $0$ and $V$ are ideals of $(V,\circ)$, which are called \emph{trivial}. $(V,\circ)$ is called \emph{simple}, if
$(V,\circ)$ has only trivial ideals and $a\circ b\neq 0$ for some $a$, $b\in V$.

\begin{remark}
Novikov algebra was essentially stated in \cite{GD} that it corresponds to a
certain Hamiltonian operator. Such an algebraic structure also appeared
in \cite{BN} from the point of view of Poisson structures of
hydrodynamic type. The name ``Novikov algebra" was given by Osborn
in \cite{Os}.
\end{remark}
\end{definition}

\begin{definition}(see \cite{GD} or \cite{X1})
A \emph{Gel'fand-Dorfman bialgebra} $V$ is a vector space over $\mathbb{C}$ with two algebraic operations $[\cdot,\cdot]$ and $\circ$ such that $(V,[\cdot,\cdot])$ forms a Lie algebra, $(V,\circ)$ forms a Novikov algebra and the following compatibility condition holds:
\begin{eqnarray*}
[a\circ b, c]-[a\circ c, b]+[a,b]\circ c-[a,c]\circ b-a\circ [b,c]=0,
\end{eqnarray*}
for any $a$, $b$, and $c\in V$. We usually denote it by $(V,\circ,[\cdot,\cdot])$.
\end{definition}
\begin{remark}
By the definition of Gel'fand-Dorfman bialgebra, any Lie algebra with the trivial Novikov algebra structure and any Novikov algebra with the trivial Lie algebra structure are Gel'fand-Dorfman bialgebras.
\end{remark}

An equivalent characterization of quadratic Lie conformal algebra is presented as follows.
\begin{theorem}(see \cite{X1})
$R=\mathbb{C}[\partial]V$ is a quadratic Lie conformal algebra if and only if the $\lambda$-bracket of $R$ is given as
follows
$$[a_{\lambda} b]=\partial(b\circ a)+[b, a]+\lambda(b\circ a+a\circ b), \text{$a$, $b\in V$},$$
and $(V, \circ, [\cdot,\cdot])$ is a Gel'fand-Dorfman bialgebra. Therefore, $R$ is called the quadratic Lie conformal algebra corresponding to
the Gel'fand-Dorfman bialgebra $(V, \circ, [\cdot,\cdot])$.
\end{theorem}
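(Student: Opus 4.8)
The plan is to prove both implications by translating the three Lie conformal algebra axioms into algebraic conditions on the bilinear maps appearing in a quadratic $\lambda$-bracket. Since $R=\mathbb{C}[\partial]V$ is free over $\mathbb{C}[\partial]$, the quadratic hypothesis lets me write $[a_\lambda b]=\partial\,u(a,b)+\lambda\,v(a,b)+w(a,b)$ for bilinear maps $u,v,w\colon V\times V\to V$, and conformal sesquilinearity is exactly the rule that extends this bracket from $V\times V$ to all of $R\times R$. One checks that the two sesquilinearity rules are mutually consistent (computing $[\partial a_\lambda\partial b]$ in either order gives $-\lambda(\lambda+\partial)[a_\lambda b]$), so sesquilinearity imposes no constraint on $u,v,w$ and merely prescribes how to compute nested brackets. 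I would therefore begin with skew-symmetry.

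Applying $[a_\lambda b]=-[b_{-\lambda-\partial}a]$ and substituting $\mu=-\lambda-\partial$ into $[b_\mu a]$, while remembering that $\partial$ acts on everything to its right so that $(-\lambda-\partial)v(b,a)=-\lambda v(b,a)-\partial v(b,a)$, I compare the coefficients of $\partial$, of $\lambda$, and the constant term. This yields $v(a,b)=v(b,a)$, $\ w(a,b)=-w(b,a)$, and $u(a,b)=v(b,a)-u(b,a)$, the last of which combines with symmetry of $v$ to give $v(a,b)=u(a,b)+u(b,a)$. Thus $v$ is determined by $u$, and setting $a\circ b:=u(b,a)$ and $[a,b]:=w(b,a)$ makes $[\cdot,\cdot]$ automatically antisymmetric and puts the bracket in exactly the asserted form $[a_\lambda b]=\partial(b\circ a)+[b,a]+\lambda(b\circ a+a\circ b)$. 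At this stage a quadratic $\lambda$-bracket obeying sesquilinearity and skew-symmetry is equivalent to the data of an arbitrary product $\circ$ together with an antisymmetric bracket $[\cdot,\cdot]$ on $V$.

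The heart of the proof is the Jacobi identity $[a_\lambda[b_\mu c]]=[[a_\lambda b]_{\lambda+\mu}c]+[b_\mu[a_\lambda c]]$. I would expand each of the three terms using the bracket above together with sesquilinearity, taking care that an inner bracket such as $[b_\mu c]$ carries a $\partial(c\circ b)$ summand, so that $[a_\lambda\partial(c\circ b)]=(\lambda+\partial)[a_\lambda(c\circ b)]$, and that the middle term requires substituting $\lambda+\mu$ into the first slot. The outcome is a polynomial identity in the commuting variables $\lambda,\mu$ (and the operator $\partial$) with values in $V$; comparing coefficients of the independent monomials separates it into a finite list of identities. I expect the highest $\lambda$-$\mu$ degree coefficient to produce the two Novikov axioms (left-symmetry of the associator and right-commutativity), the $\partial$-free lowest-degree term to produce the Jacobi identity of $(V,[\cdot,\cdot])$, and the remaining mixed terms to collapse to the single Gel'fand-Dorfman compatibility relation $[a\circ b,c]-[a\circ c,b]+[a,b]\circ c-[a,c]\circ b-a\circ[b,c]=0$. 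Together these are precisely the assertion that $(V,\circ,[\cdot,\cdot])$ is a Gel'fand-Dorfman bialgebra.

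The main obstacle is the bookkeeping in this last step: several distinct monomials may yield the \emph{same} relation, or yield identities that become the clean Novikov and compatibility relations only after combining them with the skew-symmetry identities already derived, so one must verify that the independent content of the Jacobi identity is exactly the four asserted axioms, neither more nor less. For the converse the same expansion runs in reverse: assuming the four Gel'fand-Dorfman axioms, I would define the $\lambda$-bracket by the displayed formula, extend it to $R$ by sesquilinearity, verify skew-symmetry directly from antisymmetry of $[\cdot,\cdot]$ and symmetry of $b\circ a+a\circ b$, and then read off the Jacobi identity from the coefficient comparison already computed. Because that computation establishes the equivalence monomial by monomial, no genuinely new calculation is required beyond confirming that every coefficient vanishes.
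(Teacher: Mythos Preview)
The paper does not supply its own proof of this theorem: it is quoted as a known result with references to \cite{GD} and \cite{X1}, and no argument is given in the text. Your proposal is the standard direct computation that one finds in those sources---write the general quadratic $\lambda$-bracket with bilinear coefficients $u,v,w$, use skew-symmetry to force $v(a,b)=u(a,b)+u(b,a)$ and the antisymmetry of $w$, then expand the Jacobi identity and separate it by monomials in $\lambda,\mu,\partial$ to obtain exactly the Novikov axioms, the Lie Jacobi identity, and the compatibility relation. That outline is correct, and your handling of the skew-symmetry step is accurate. Since the paper offers no proof to compare against, there is nothing further to contrast; your plan is precisely the computation the cited references carry out.
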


Finally, we introduce some notations about the Gel'fand-Dorfman bialgebra $(V, \circ, [\cdot,\cdot])$.

Denote $a\ast b=a\circ b+b\circ a$ for any $a$, $b\in V$. Then $(V, \ast)$ is a commutative but not (usually) associative algebra.
Therefore, the $\lambda$-bracket of a quadratic Lie conformal algebra can be written as
\begin{eqnarray}\label{gf1}
[a_{\lambda} b]=\partial(b\circ a)+[b, a]+\lambda(a\ast b), \text{$a$, $b\in V$}.
\end{eqnarray}
For convenience, the coefficient algebra of the Lie conformal algebra corresponding to $(V, \circ, [\cdot,\cdot])$ is
denoted by  $\mathcal{L}(V)$.

Moreover, if a Gel'fand-Dorfman bialgebra is a Novikov algebra with the trivial Lie algebra structure, for convenience, then we usually say the quadratic Lie conformal algebra corresponds to the Novikov algebra.

\section{Central extensions of quadratic Lie conformal algebras}

In this section, we will study central extensions of quadratic Lie conformal algebras by a one-dimensional center $\mathbb{C}\mathfrak{c}_\beta$ with $\partial \mathfrak{c}_\beta=\beta \mathfrak{c}_\beta$. We denote $\mathfrak{c}_0$ by $\mathfrak{c}$. The method using here can be also applied to central extensions of quadratic Lie conformal algebras by an
abelian Lie conformal algebra $\mathbb{C}[\partial]\mathfrak{c}$ which is free of rank one as a $\mathbb{C}[\partial]$-module.

An extension of a Lie conformal algebra $R$ by an abelian Lie conformal algebra $C$ is a short exact sequence of Lie conformal algebras
\begin{eqnarray*}
0\rightarrow C\rightarrow \widehat{R}\rightarrow R\rightarrow 0.
\end{eqnarray*}
$\widehat{R}$ is called an \emph{extension} of $R$ by $C$ in this case. This extension is called \emph{central} if
$[C_\lambda \widehat{R}]=0$.

In the following, we focus on the central extension $\widehat{R}$ of $R$ by a one-dimensional center $\mathbb{C}\mathfrak{c}_\beta$. Since quadratic Lie conformal algebras are free as a $\mathbb{C}[\partial]$-module, we always assume that $R$ is free as a $\mathbb{C}[\partial]$-module. This implies that $\widehat{R}=R\oplus \mathbb{C}\mathfrak{c}_\beta$, and
\begin{eqnarray*}
[a_\lambda b]_{\widehat{R}}=[a_\lambda b]_R+\alpha_\lambda(a,b)\mathfrak{c}_\beta, \text{for~~all~~$a$, $b\in R$,}
\end{eqnarray*}
where $\alpha_\lambda(\cdot,\cdot): R\times R\rightarrow \mathbb{C}[\lambda]$ is a $\mathbb{C}$-bilinear map. By the axioms of Lie conformal algebra, $\alpha_\lambda(\cdot,\cdot)$ should satisfy the following properties (for all $a$, $b$, $c\in R$) :
\begin{eqnarray}
&&\label{e3}\alpha_\lambda(\partial a,b)=-\lambda \alpha_\lambda(a,b),~~~\alpha_\lambda(a,\partial b)=(\lambda+\beta)\alpha_\lambda(a,b),\\
&&\label{e4}\alpha_\lambda(a,b)=-\alpha_{-\lambda-\beta}(b,a),\\
&&\label{ec1}\alpha_\lambda(a,[b_\mu c])-\alpha_\mu(b,[a_\lambda c])=\alpha_{\lambda+\mu}([a_\lambda b],c).
\end{eqnarray}
According to the cohomology theory of Lie conformal algebra in \cite{BKV}, $\alpha_\lambda(\cdot,\cdot)$ is a 2-cocycle in the reduced complex of $R$ with values in the trivial $R$-module $\mathbb{C}_\beta$ where $\mathbb{C}_\beta=\mathbb{C}$ and $\partial k=\beta k$ for any $k\in \mathbb{C}_\beta$. Denote $\mathbb{C}_0$ by $\mathbb{C}$. It is shown in \cite{BKV} that the central extensions of $R$ by $\mathbb{C}\mathfrak{c}_\beta$ up to equivalence can be parameterized by $H^2(R,\mathbb{C}\mathfrak{c}_\beta)$. Moreover, by the definition of 2-coboundary, 2-cocycles $\alpha_\lambda(\cdot,\cdot)$ and $\alpha_\lambda^{'}(\cdot,\cdot)$ are equivalent if and only if there exists a $\mathbb{C}[\partial]$-module homomorphism $\varphi: R\rightarrow \mathbb{C}_\beta$  such that $\alpha_\lambda(a,b)=\alpha_\lambda^{'}(a,b)+\varphi([a_\lambda b])$ for all
$a$, $b\in R$. Note that equivalent 2-cocycles define equivalent central extensions.

Central extensions of quadratic Lie conformal algebras by a one-dimensional center $\mathbb{C}\mathfrak{c}_\beta$ are characterized as follows.
\begin{theorem}\label{t1}
Let $\widehat{R}=R\oplus\mathbb{C}\mathfrak{c}_\beta$ be a central extension of quadratic Lie conformal algebra $R=\mathbb{C}[\partial]V$  corresponding to $(V,\circ, [\cdot,\cdot])$ by a one-dimensional center $\mathbb{C}\mathfrak{c}_\beta$.
Set the $\lambda$-bracket of $\widehat{R}$ by
\begin{eqnarray}
\label{e2}\widetilde{[a_\lambda b]}=\partial(b\circ a)+\lambda(a\ast b)+[b,a]+\alpha_\lambda(a,b)\mathfrak{c}_\beta,
\end{eqnarray}
where $a$, $b\in V$ and $\alpha_\lambda(a,b)\in \mathbb{C}[\lambda]$. Assume that $\alpha_\lambda(a,b)=\sum_{i=0}^n\lambda^i\alpha_i(a,b)$ for any $a$, $b\in V$, where $\alpha_i(\cdot,\cdot):V\times V\rightarrow \mathbb{C}$ are bilinear forms on $V$ and there exist some $a$, $b \in V$ such that
$\alpha_n(a,b)\neq 0$. Then we obtain (for any $a$, $b$, $c\in V$)\\
(1) If $n>3$,  $\alpha_n(a\circ b, c)=0$ ;\\
(2) If $n\leq 3$,
\begin{gather}
\label{eqqq1}\sum_{i=0}^3\lambda^i\alpha_i(a,b)=-\sum_{i=0}^3(-\lambda-\beta)^{i}\alpha_i(b,a), \\
\label{eqqe1}\alpha_3(a,c\circ b)=\alpha_3(a\circ b,c)=\alpha_3(b\circ a, c),\\
\label{eqqx2}\alpha_2(a,c\circ b)+\beta \alpha_3(a, c\circ b)+\alpha_3(a,[c,b])=\alpha_2(a\circ b,c)+\alpha_3([b,a],c),\\
\label{eqq3}\alpha_2(a,b\ast c)+\alpha_2(b\circ a,c)=2\alpha_2(a\circ b,c)+3\alpha_3([b,a],c),\\
\label{eqq4}\alpha_1(a,c\circ b)+\beta \alpha_2(a,c\circ b)+\alpha_2(a,[c,b])=\alpha_1(a\circ b,c)+\alpha_2([b,a],c),\\
\label{eqq5}\alpha_1(a,b\ast c)-\alpha_1(b,a\ast c)=-\alpha_1(b\circ a,c)+\alpha_1(a\circ b,c)+2\alpha_2([b,a],c),\\
\label{eqq6}\alpha_0(a,c\circ b)+\beta \alpha_1(a, c\circ b)+\alpha_1(a,[c,b])-\alpha_0(b, a\ast c)\\
=\alpha_0(a\circ b,c)+\alpha_1([b,a],c),\nonumber\\
\label{eqq7}\beta\alpha_0(a,c\circ b)-\beta\alpha_0(b,c\circ a)+\alpha_0(a,[c,b])-\alpha_0(b,[c,a])=\alpha_0([b,a],c);
\end{gather}
(3) Such two 2-cocycles $\alpha_\lambda(\cdot,\cdot)$ and $\alpha_\lambda^{'}(\cdot,\cdot)$ are equivalent if and only if there exists a linear map
$\varphi: V\rightarrow \mathbb{C}$ such that
\begin{eqnarray}
\alpha_\lambda(a,b)=\alpha_\lambda^{'}(a,b)+\beta \varphi(b\circ a)+\lambda \varphi(a\ast b)+\varphi([b,a]), ~~~~\text{for all $a$, $b\in V$}.
\end{eqnarray}
\end{theorem}
\begin{proof}
According to (\ref{e4}), we get $\sum_{i=0}^n\lambda^i\alpha_i(a,b)=-\sum_{i=0}^n(-\lambda-\beta)^i\alpha_i(b,a)$.

By (\ref{e3}) and (\ref{e4}), (\ref{ec1}) becomes
\begin{gather}
\label{cee}(\lambda+\beta)\alpha_\lambda(a,c\circ b)+\mu\alpha_\lambda(a, b\ast c)+\alpha_\lambda(a, [c,b])
-(\mu+\beta)\alpha_\mu(b,c\circ a)\\
-\lambda\alpha_\mu(b,a\ast c)-\alpha_\mu(b,[c,a])\nonumber\\
=(-\lambda-\mu)\alpha_{\lambda+\mu}(b\circ a, c)+\lambda \alpha_{\lambda+\mu}(a\ast b,c)+\alpha_{\lambda+\mu}([b,a],c).\nonumber
\end{gather}

If $n> 3$, by the assumption that $\alpha_\lambda(a,b)=\sum_{i=0}^n\lambda^i\alpha_i(a,b)$ and comparing the coefficients of $\lambda^2\mu^{n-1}$ and $\lambda^{n-1}\mu^2$, we obtain
\begin{eqnarray*}
n\alpha_n(a\circ b,c)-C_n^2\alpha_n(b\circ a,c)=0,~~C_n^2\alpha_n(a\circ b,c)-n\alpha_n(b\circ a,c)=0.
\end{eqnarray*}
Since $n>3$,  $\alpha_n(a\circ b, c)=0$ for any $a$, $b$, $c\in V$.

If $n\leq 3$, taking $\alpha_\lambda(a,b)=\sum_{i=0}^3\lambda^i\alpha_i(a,b)$ into (\ref{cee}) and comparing the coefficients of
$\lambda^4$, $\lambda\mu^3$, $\lambda^2\mu^2$, $\lambda^3$, $\lambda^2\mu$, $\lambda^2$, $\lambda\mu$, $\lambda$ and
$\lambda^0\mu^0$, we get
\begin{gather}
\label{eqe1}\alpha_3(a,c\circ b)=\alpha_3(a\circ b,c),\\
\label{eqe2}-\alpha_3(b,a\ast c)=-3\alpha_3(b\circ a,c)+\alpha_3(a\circ b,c),\\
\label{eqe3}\alpha_3(b\circ a,c)=\alpha_3(a\circ b,c),
\end{gather}
and (\ref{eqqe1})-(\ref{eqq7}).

By (\ref{eqe1}) and (\ref{eqe3}),
(\ref{eqe2}) is reduced to
\begin{eqnarray}
\label{eq16}\alpha_3(b,a\circ c)=\alpha_3(b\circ a,c).\end{eqnarray}
Since $\alpha_3(a,b)=\alpha_3(b,a)$, we can obtain (\ref{eq16}) from (\ref{eqqe1}) as follows:
$$\alpha_3(b,a\circ c)=\alpha_3(b\circ c,a)=\alpha_3(c\circ b,a)=\alpha_3(c,a\circ b)=\alpha_3(a\circ b,c)=\alpha_3(b\circ a,c).$$
Therefore, (\ref{eqe1})-(\ref{eqe3}) are equivalent to (\ref{eqqe1}). Then it is easy to see that
$\alpha_\lambda(\cdot,\cdot)=\sum_{i=0}^3\lambda^i\alpha_i(\cdot,\cdot)$ is a 2-cocycle if and only if
(\ref{eqqq1})-(\ref{eqq7}) hold.

Finally, by the discussion above this theorem, such two 2-cocycles $\alpha_\lambda(\cdot,\cdot)$ and $\alpha_\lambda^{'}(\cdot,\cdot)$ are equivalent if and only if there exists a $\mathbb{C}[\partial]$-module homomorphism $\varphi: R\rightarrow \mathbb{C}_\beta$ such that
$\alpha_\lambda(a,b)=\alpha_\lambda^{'}(a,b)+\varphi(\partial(b\circ a)+\lambda(a\ast b)+[b,a]), $ for all $a$, $b\in V$.
Since $\varphi$ is a $\mathbb{C}[\partial]$-module homomorphism, $\varphi(\partial(b\circ a))=\beta\varphi(b\circ a)$. Moreover, since $R=\mathbb{C}[\partial]V$ is free as a $\mathbb{C}[\partial]$-module, a $\mathbb{C}[\partial]$-module homomorphism $\varphi: R\rightarrow \mathbb{C}_\beta$ can be determined by the restricted linear map $\varphi\mid_V: V\rightarrow \mathbb{C}$. Then (3) can be directly obtained.
\end{proof}

\begin{remark}
By (3) in Theorem \ref{t1}, if there exists some nonzero $\alpha_i(\cdot,\cdot)$ for $i\geq 2$, then there are non-trivial central extensions of this Lie conformal algebra up to equivalence.
\end{remark}

\begin{remark}\label{rem1}
If we consider central extensions of quadratic Lie conformal algebras by an abelian Lie conformal algebra $\mathbb{C}[\partial]\mathfrak{c}$ which is free of rank one as a $\mathbb{C}[\partial]$-module, Theorem \ref{t1} also holds, when we replace $\mathfrak{c}_\beta$ by $\mathbb{C}[\partial]\mathfrak{c}$, $\beta$ by $\partial$, $\alpha_\lambda(a,b)\in \mathbb{C}[\lambda]$ by $\alpha_\lambda(a,b)\in \mathbb{C}[\lambda,\partial]$ for any $a$, $b\in V$,
$\mathbb{C}$ in (3) by $\mathbb{C}[\partial]$. Note that in this case, $\alpha_i(a,b)\in \mathbb{C}[\partial]$ for any $a$, $b\in V$ and $i\in \{0,\cdots, n\}$.
\end{remark}
\begin{corollary}\label{co1}
Let $R=\mathbb{C}[\partial]V$ be a finite quadratic Lie conformal algebra corresponding to the Gel'fand-Dorfman bialgebra $(V, \circ, [\cdot,\cdot])$ with $V=V\circ V$. Set $\widehat{R}=R\oplus\mathbb{C}\mathfrak{c}_\beta$ be a central extension of $(R,[\cdot_\lambda \cdot])$
with the $\lambda$-bracket given by (\ref{e2}). Then we obtain $\alpha_\lambda(a,b)=\sum_{i=0}^3\lambda^i\alpha_i(a,b)$ for any $a$, $b\in V$, where $\alpha_i(\cdot,\cdot):V\times V\rightarrow \mathbb{C}$ are bilinear forms on $V$ for any $i\in\{0,1,2,3\}$ and they satisfy (\ref{eqqq1})-(\ref{eqq7}).  Moreover,  such two 2-cocycles $\alpha_\lambda(\cdot,\cdot)$ and $\alpha_\lambda^{'}(\cdot,\cdot)$ are equivalent if and only if $\alpha_i(a,b)=\alpha_i^{'}(a,b)$ for $i=2$ and $i=3$, and there exists a linear map
$\varphi: V\rightarrow \mathbb{C}$ such that
\begin{eqnarray}
\alpha_1(a,b)=\alpha_1^{'}(a,b)+\varphi(a\ast b),\\
\alpha_0(a,b)=\alpha_0^{'}(a,b)+\beta\varphi(b\circ a)+\varphi([b,a]).
\end{eqnarray}
for all $a$, $b\in V$.
\end{corollary}
\begin{proof}
Since $R=\mathbb{C}[\partial]V$ is a finitely generated and free $\mathbb{C}[\partial]$-module, we can assume that $\alpha_\lambda(a,b)=\sum_{i=0}^n\lambda^i\alpha_i(a,b)$ for any $a$, $b\in V$ and some non-negative integer $n$.
Since $V=V\circ V$, for any $x\in V$, there exist some $m$ and $y_i$, $z_i\in V$ such that $x=\sum_{i=0}^my_i\circ z_i$.
When $n>3$, by Theorem \ref{t1}, we can obtain that for any $x$, $a\in V$, $\alpha_n(x, a)=\sum_{i=0}^m\alpha_n(y_i\circ z_i, a)=0$. Therefore, if $n>3$, $\alpha_n(a,b)=0$ for any $a$, $b\in V$.
So, by Theorem \ref{t1}, $n\leq 3$ and (\ref{eqqq1})-(\ref{eqq7}) hold for any
$a$, $b$, $c\in V$. Then we obtain this corollary.
\end{proof}

\begin{corollary}\label{c1}
Let $R=\mathbb{C}[\partial]V$ be a quadratic Lie conformal algebra corresponding to the Novikov algebra $(V, \circ)$ with $V=V\circ V$. Set $\widehat{R}=R\oplus\mathbb{C}\mathfrak{c}_\beta$ be a central extension of $(R,[\cdot_\lambda \cdot])$ with the $\lambda$-bracket of $\widehat{R}$ given by
\begin{eqnarray}
\label{ex2}\widetilde{[a_\lambda b]}=\partial(b\circ a)+\lambda(a\ast b)+\alpha_\lambda(a,b)\mathfrak{c}_\beta,
\end{eqnarray}
where $a$, $b\in V$ and $\alpha_\lambda(a,b)\in \mathbb{C}[\lambda]$.
 Then we get $\alpha_\lambda(a,b)=\sum_{i=0}^3\lambda^i\alpha_i(a,b)$ for any $a$, $b\in V$, where $\alpha_i(\cdot,\cdot):V\times V\rightarrow \mathbb{C}$ are bilinear forms for any $i\in\{0,1,2,3\}$ and they satisfy (\ref{eqqq1}), (\ref{eqqe1}) and
\begin{eqnarray}
\label{exx1}\alpha_2(a,c\circ b)+\beta \alpha_3(a, c\circ b)=\alpha_2(a\circ b,c),\\
\label{exx2}\alpha_2(a,b\ast c)+\alpha_2(b\circ a,c)=2\alpha_2(a\circ b,c),\\
\label{exx3}\alpha_1(a,c\circ b)+\beta \alpha_2(a,c\circ b)=\alpha_1(a\circ b,c),\\
\label{exx4}\alpha_1(a,b\ast c)-\alpha_1(b,a\ast c)=-\alpha_1(b\circ a,c)+\alpha_1(a\circ b,c),\\
\label{exx5}\alpha_0(a,c\circ b)+\beta \alpha_1(a, c\circ b)-\alpha_0(b, a\ast c)
=\alpha_0(a\circ b,c),\\
\label{exx6}\beta(\alpha_0(a,c\circ b)-\alpha_0(b,c\circ a))=0,
\end{eqnarray}
for any
$a$, $b$, $c\in V$. In particular, when $\beta=0$, (\ref{exx1})-(\ref{exx6}) are equivalent to
\begin{eqnarray}
\label{eq1}&&\alpha_2(a,b\circ c)+\alpha_2(b\circ a,c)=\alpha_2(a\circ b, c)=\alpha_2(a,c\circ b),\\
\label{eq2}&&\alpha_1(a,c\circ b)=\alpha_1(a\circ b,c),\\
\label{eq3}&&\alpha_0(c\circ a,b)-\alpha_0(c\circ b,a)=\alpha_0(a\circ b,c)-\alpha_0(a\circ c,b)
\end{eqnarray}
Moreover,  such two 2-cocycles $\alpha_\lambda(\cdot,\cdot)$ and $\alpha_\lambda^{'}(\cdot,\cdot)$ are equivalent if and only if $\alpha_i(a,b)=\alpha_i^{'}(a,b)$ for $i=2$ and $i=3$, and there exists a linear map
$\varphi: V\rightarrow \mathbb{C}$ such that
\begin{eqnarray}
\alpha_1(a,b)=\alpha_1^{'}(a,b)+\varphi(a\ast b),\\
\alpha_0(a,b)=\alpha_0^{'}(a,b)+\beta\varphi(b\circ a),
\end{eqnarray}
for all $a$, $b\in V$.
\end{corollary}
\begin{proof}
For any $a$, $b\in V$, set $\alpha_\lambda(a,b)=\sum_{i=0}^{n_{a,b}}\lambda^i\alpha_i(a,b)$ where
$\alpha_i(\cdot,\cdot)$ are bilinear forms on $V$ and $n_{a,b}$ is a non-negative integer depending on $a$ and $b$.
With the same process as in Theorem \ref{t1}, (\ref{cee}) becomes
\begin{gather}
(\lambda+\beta)\alpha_\lambda(a,c\circ b)+\mu\alpha_\lambda(a, b\ast c)
-(\mu+\beta)\alpha_\mu(b,c\circ a)-\lambda\alpha_\mu(b,a\ast c)\nonumber\\
\label{tt1}=(-\lambda-\mu)\alpha_{\lambda+\mu}(b\circ a, c)+\lambda \alpha_{\lambda+\mu}(a\ast b,c).
\end{gather}
For fixed $a$, $b$, $c$, there are only finite elements of $V$ appearing in $\alpha_\lambda(\cdot,\cdot)$ in (\ref{tt1}). Therefore, we may assume the degrees of all $\alpha_\lambda(\cdot,\cdot)$ in (\ref{tt1}) are smaller than some non-negative integer. So, we set $\alpha_\lambda(a,c\circ b)
=\sum_{i=0}^n\lambda^i\alpha_i(a, c\circ b)$, $\cdots$ and
$\alpha_{\lambda+\mu}(a\ast b,c)=\sum_{i=0}^n(\lambda+\mu)^i\alpha_i(a\ast b,c)$. Of course, here, $n$ depends on
$a$, $b$ and $c$.

If $n>3$, by comparing the coefficients of $\lambda^2\mu^{n-1}$ and $\lambda^{n-1}\mu^2$ in (\ref{tt1}), we get
\begin{eqnarray*}
n\alpha_n(a\circ b,c)-C_n^2\alpha_n(b\circ a,c)=0,~~C_n^2\alpha_n(a\circ b,c)-n\alpha_n(b\circ a,c)=0.
\end{eqnarray*}
Therefore, $\alpha_n(a\circ b,c)=\alpha_n(b\circ a,c)=0$. Repeating this process, we can get
$\alpha_m(a\circ b,c)=\alpha_m(b\circ a,c)=0$ for all $n\geq m> 3$.

By the discussion above, for any $a$, $b$, $c\in V$, we get $\alpha_m(a\circ b,c)=0$ for all $m>3$.
Since for any $x\in V$, there exist some $m$ and $y_i$, $z_i\in V$ such that $x=\sum_{i=0}^my_i\circ z_i$, we get
$\alpha_m(x,c)=0$ for all $m>3$ and any $c\in V$. Hence, $\alpha_\lambda(a,b)=\sum_{i=0}^3 \lambda^i\alpha_i(a,b)$. Moreover,
it is easy to see that (\ref{exx1})-(\ref{exx6}) are equivalent to (\ref{eqqx2})-(\ref{eqq7}) with
$[\cdot,\cdot]$ trivial, and (\ref{eq1})-(\ref{eq3}) are equivalent to (\ref{exx1})-(\ref{exx6}) when $\beta=0$.

By now, the proof is finished.
\end{proof}

\begin{remark}
Note that Corollary \ref{c1} also holds when $V$ is infinite-dimensional.
\end{remark}

\begin{corollary}\label{co2}
If for Novikov algebra $(V,\circ)$, there is a right unit $e\in V$, then the central extensions of quadratic Lie conformal algebra $R=\mathbb{C}[\partial]V$ by $\mathbb{C}\mathfrak{c}_\beta$ with $\beta\neq 0$ are trivial, i.e.
$H^2(R,\mathbb{C}\mathfrak{c}_\beta)=0$.

When $\beta=0$ and $(V, \circ)$ is a Novikov algebra with a unit $1$ in Corollary \ref{c1}, then $\alpha_2(a, b)=\alpha_0(a,b)=0$ for any $a$, $b\in V$.
\end{corollary}
\begin{proof}
Obviously, if $(V, \circ)$ has a right unit $e$ or a unit $1$, $V=V\circ V$.

When $\beta\neq 0$, setting $b=e$ in (\ref{exx1}), (\ref{exx3}),
(\ref{exx5}) and (\ref{exx6}), we get that $\alpha_3(a,c)=\alpha_2(a,c)=0$, $\alpha_1(a,c)=\frac{\alpha_0(e,c\ast a)}{\beta}$, and $\alpha_0(a,c)=\alpha_0(e,c\circ a)$ for any $a$, $c\in V$. Set
$\varphi(a)=\frac{\alpha_0(e,a)}{\beta}$ for any $a\in V$ in Corollary \ref{c1}. By Corollary \ref{c1}, we can make $\alpha_1(\cdot,\cdot)$ and $\alpha_0(\cdot,\cdot)$ be zero. Therefore, in this case, $H^2(R,\mathbb{C}\mathfrak{c}_\beta)=0$.

When $\beta=0$ and $(V, \circ)$ is a Novikov algebra with a unit $1$, letting $b=1$ in (\ref{eq1}), we can  directly obtain
$\alpha_2(a,c)=0$ for any $a$, $c\in V$. Similarly, by (\ref{eqqq1}), we get $\alpha_0(a,b)=-\alpha_0(b,a)$ for any $a$, $b\in V$. Letting $c=b=1$ in (\ref{eq3}) and
using $\alpha_0(a,1)=-\alpha_0(1,a)$, we have $\alpha_0(a,1)=0$ for any $a\in V$. Then setting $c=1$ in (\ref{eq3}) and using $\alpha_0(a,1)=0$ for any $a\in V$, $\alpha_0(a,b)=0$ is obtained for any $a$, $b\in V$.
\end{proof}
\begin{remark}
By Remark \ref{rem1}, Corollary \ref{c1} also holds if we replace $\mathfrak{c}_\beta$ by $\mathbb{C}[\partial]\mathfrak{c}$, $\beta$ by $\partial$ and $\alpha_\lambda(a,b)\in \mathbb{C}[\lambda]$ by $\alpha_\lambda(a,b)\in \mathbb{C}[\lambda,\partial]$ for any $a$, $b\in V$.

According to Corollary \ref{co2}, if Novikov algebra $(V,\circ)$ has a right unit, then the central extensions of quadratic Lie conformal algebra $R=\mathbb{C}[\partial]V$ by an abelian Lie conformal algebra $\mathbb{C}[\partial]\mathfrak{c}$ are trivial, i.e. $H^2(R, \mathbb{C}[\partial]\mathfrak{c})=0$.
\end{remark}
\begin{proposition}\label{pro1}
Let $(V, \circ, [\cdot,\cdot])$ be a Gel'fand-Dorfman bialgebra and $\alpha_i(\cdot,\cdot)$ $(i=0, 1, 2, 3)$ be bilinear forms
on $V$ satisfying (\ref{eqqq1})-(\ref{eqq7}) for any $a$, $b$, $c\in V$ with $\beta=0$. In addition, let $\pi: \mathcal{L}(V)\times \mathcal{L}(V)\rightarrow \mathbb{C}$ be the bilinear form on $\mathcal{L}(V)$ defined by
\begin{eqnarray*}
\pi(a_m, b_n)&=&\alpha_0(a,b)\delta_{m+n+1,0}+m\alpha_1(a,b)\delta_{m+n,0}\\
&&+m(m-1)\alpha_2(a,b)\delta_{m+n-1,0}+m(m-1)(m-2)\alpha_3(a,b)\delta_{m+n-2,0},
\end{eqnarray*}
for $a$, $b\in V$, $m$, $n\in \mathbb{Z}$. Then $\pi$ is a 2-cocycle of Lie algebra $\mathcal{L}(V)$.
\end{proposition}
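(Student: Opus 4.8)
The plan is to realize $\pi_0,\dots,\pi_3$ as the homogeneous components, with respect to the $t$-degree, of the single $2$-cocycle that the central extension $\widehat R$ induces on the coefficient algebra, and then to verify the cocycle identity for each component directly from \eqref{eqqq1}--\eqref{eqq7}. First I would make the bracket of $\mathcal L(V)$ explicit. Writing $a\otimes t^m$ for the image of $a_m$ and using \eqref{106} together with the $\lambda$-bracket \eqref{gf1}, only $a_{(0)}b=\partial(b\circ a)+[b,a]$ and $a_{(1)}b=a\ast b$ survive, and after applying $(\partial u)_k=-k\,u_{k-1}$ one obtains
\[
[a\otimes t^m,\,b\otimes t^n]=[b,a]\otimes t^{m+n}+\bigl(m(a\circ b)-n(b\circ a)\bigr)\otimes t^{m+n-1}.
\]
In the same notation each $\pi_i$ takes the compact form $\pi_i(a\otimes t^p,b\otimes t^q)=(p)_i\,\alpha_i(a,b)\,\delta_{p+q+1,i}$, where $(p)_i=p(p-1)\cdots(p-i+1)$ is the falling factorial; in particular $\pi_i$ is supported on pairs with $p+q=i-1$.

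The antisymmetry of each $\pi_i$ I would deduce from \eqref{eqqq1}. On the support $p+q=i-1$ one has the combinatorial identity $(q)_i=(-1)^i(p)_i$ (substitute $q=i-1-p$ and reverse the order of the $i$ factors), so that
\[
\pi_i(a\otimes t^p,b\otimes t^q)+\pi_i(b\otimes t^q,a\otimes t^p)=\bigl((p)_i+(-1)^{i+1}(q)_i\bigr)\alpha_i(a,b)=0,
\]
using $\alpha_i(b,a)=(-1)^{i+1}\alpha_i(a,b)$ from \eqref{eqqq1}.

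For the cocycle identity I would evaluate $\pi_i([x,y],z)+\pi_i([y,z],x)+\pi_i([z,x],y)$ on $x=a\otimes t^l$, $y=b\otimes t^m$, $z=c\otimes t^n$. Since the $t$-degree $l+m+n$ is preserved by the bracket while $\pi_i$ is supported in degree $i-1$, the expression can only be nonzero when $l+m+n\in\{i-1,i\}$, and it splits into two independent identities: a Lie part at $l+m+n=i-1$,
\[
(l+m)_i\,\alpha_i([b,a],c)+(m+n)_i\,\alpha_i([c,b],a)+(n+l)_i\,\alpha_i([a,c],b)=0,
\]
and a Novikov part at $l+m+n=i$,
\[
(l+m-1)_i\bigl(l\,\alpha_i(a\circ b,c)-m\,\alpha_i(b\circ a,c)\bigr)+\text{(cyclic)}=0.
\]
Each is a polynomial identity in $l,m,n$ on the indicated hyperplane; expanding the falling factorials and comparing coefficients of the monomials reduces it to finitely many scalar equations in the $\alpha_i(\,\cdot\circ\cdot\,,\cdot)$, $\alpha_i(\cdot,\cdot\circ\cdot)$ and $\alpha_i(\,\cdot\,,[\cdot,\cdot])$. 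I would then check, case by case for $i=0,1,2,3$, that these are consequences of \eqref{eqqe1}--\eqref{eqq7} and of the Novikov and compatibility axioms of the Gel'fand--Dorfman bialgebra. For instance, for $i=0$ the Lie part is exactly the Jacobi-type identity \eqref{eqq7}, while for the top case $i=3$ the Novikov part is governed by the symmetry relation \eqref{eqqe1}.

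The main obstacle is the \emph{decoupling}. Since \eqref{eqqq1}--\eqref{eqq7} are equivalent to $\alpha_\lambda=\sum_i\lambda^i\alpha_i$ being a conformal $2$-cocycle, $\widehat R$ is a Lie conformal algebra and its coefficient algebra supplies at once the closedness of the total form $\omega=\pi_0+\pi_1+\pi_2+\pi_3$. However, organizing $d\omega=0$ by $t$-degree yields only a chain of coupled identities linking the Novikov part of $\pi_i$ to the Lie part of $\pi_{i+1}$, so closedness of $\omega$ alone does not give closedness of the individual $\pi_i$. Breaking this coupling is the real content: it forces one to use the finer per-bidegree relations \eqref{eqqx2}--\eqref{eqq6} (which encode all $\lambda^a\mu^b$ components of \eqref{cee}, not merely the diagonal producing $\omega$) together with the bialgebra axioms to establish, for each $i$ separately, the pure-$\alpha_i$ identities above. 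A representative difficulty is the purely $\circ$-identity needed for the Novikov part of $\pi_0$, which contains no Lie bracket yet must be extracted from relations that a priori mix $\alpha_0$ with $\alpha_1$. The falling-factorial weights $(p)_i$ are exactly what make such extractions nontrivial, and carrying them out uniformly in $i$ is the crux of the proof.
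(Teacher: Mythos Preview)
The paper's argument is far shorter than yours: it simply notes that $\alpha_\lambda=\sum_{i=0}^3\lambda^i\alpha_i$ makes $\widehat R=R\oplus\mathbb{C}\mathfrak{c}$ into a Lie conformal algebra, writes down the bracket on $\text{Coeff}(\widehat R)=\mathcal{L}(V)\oplus\mathbb{C}\mathfrak{c}_{-1}$, observes that the central term is precisely $\pi_0+\pi_1+\pi_2+\pi_3$, and then declares that one ``naturally obtains 4 kinds of 2-cocycles'' without further comment.

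Your proposal already contains this step (you explicitly say the coefficient algebra of $\widehat R$ ``supplies at once the closedness of the total form $\omega=\pi_0+\pi_1+\pi_2+\pi_3$''), but you then go further and put your finger on something the paper glosses over. Your observation that the $t$-degree filtration does \emph{not} separate the summands---because the Novikov contribution of $\pi_i$ and the Lie contribution of $\pi_{i+1}$ both live at total degree $l+m+n=i$---is correct, and it means that closedness of $\omega$ alone does not immediately yield closedness of each $\pi_i$. The paper's one-line conclusion does not address this; your plan to establish the per-$i$ cocycle identities directly from the individual bidegree relations \eqref{eqqe1}--\eqref{eqq6} (rather than only from their sum) together with the Gel'fand--Dorfman axioms is precisely the extra work that the paper's proof leaves implicit. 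So your route is not really different from the paper's---it begins the same way---but it is more honest about what remains to be checked, and it supplies a strategy for the decoupling step that the paper simply asserts.
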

\begin{proof}
Let $R=\mathbb{C}[\partial]V$ be a quadratic Lie conformal algebra corresponding to the Gel'fand-Dorfman bialgebra $(V, \circ, [\cdot,\cdot])$. Denote $\mathfrak{c}_0$ by $\mathfrak{c}$.
Assume that $\widehat{R}$ is the central extension defined by (\ref{e2}). It is easy to see that the coefficient algebra of $\widehat{R}$ is $\text{Coeff}(\widehat{R})=\mathcal{L}(V)\oplus \mathbb{C}\mathfrak{c}_{-1}$ with the following Lie bracket: $[a_m,{\mathfrak{c}}_{-1}]=0$,  and
\begin{gather}
[a_m, b_n]=[a,b]_{m+n}+m(a\circ b)_{m+n-1}-n(b\circ a)_{m+n-1}+(\alpha_0(a,b)\delta_{m+n+1,0}\nonumber\\
+m\alpha_1(a,b)\delta_{m+n,0}+m(m-1)\alpha_2(a,b)\delta_{m+n-1,0}\nonumber\\
\label{gt1}+m(m-1)(m-2)\alpha_3(a,b)\delta_{m+n-2,0}){\mathfrak{c}}_{-1},
\end{gather}
for any $a$, $b\in V$, and $m$, $n\in \mathbb{Z}$. Obviously, $\text{Coeff}(\widehat{R})$ is the central extension of $\mathcal{L}(V)$ by a one-dimensional center $\mathbb{C}\mathfrak{c}_{-1}$.
Therefore, by $(\ref{gt1})$, $\pi$ is a 2-cocycle of $\mathcal{L}(V)$.
\end{proof}

\begin{corollary}
Let $(V, \circ)$ be a Novikov algebra and $\alpha_i(\cdot,\cdot)$ $(i=0, 1, 2, 3)$ be bilinear forms
on $V$ satisfying  (\ref{eqqq1}), (\ref{eqqe1}) and (\ref{eq1})-(\ref{eq3}) for any $a$, $b$, $c\in V$ with $\beta=0$. Moreover, let $\pi_i: \mathcal{L}(V)\times \mathcal{L}(V)\rightarrow \mathbb{C}$ be bilinear forms on $\mathcal{L}(V)$ defined by
\begin{eqnarray}
&&\pi_0(a\otimes t^m, b\otimes t^n)=\alpha_0(a,b)\delta_{m+n+1,0},\\
&&\pi_1(a\otimes t^m, b\otimes t^n)=m\alpha_1(a,b)\delta_{m+n,0},\\
&&\pi_2(a\otimes t^m, b\otimes t^n)=m(m-1)\alpha_2(a,b)\delta_{m+n-1,0},\\
&&\pi_3(a\otimes t^m, b\otimes t^n)=m(m-1)(m-2)\alpha_3(a,b)\delta_{m+n-2,0},
\end{eqnarray}
for $a$, $b\in V$, $m$, $n\in \mathbb{Z}$. Then $\pi_0$, $\pi_1$, $\pi_2$, $\pi_3$ are 2-cocycles of Lie algebra $\mathcal{L}(V)$.
\end{corollary}
\begin{proof}
According to that $\alpha_0(\cdot,\cdot)$, $\alpha_1(\cdot,\cdot)$, $\alpha_2(\cdot,\cdot)$ and $\alpha_3(\cdot,\cdot)$ donot depend on each other, this corollary can be directly obtained from Proposition \ref{pro1}.
\end{proof}
\begin{remark}
This corollary can also be referred to Proposition 2.5 in \cite{PB3}.
\end{remark}

By Corollary \ref{co2}, when $\beta\neq 0$, $H^2(R,\mathbb{C}\mathfrak{c}_\beta)=0$ for any quadratic Lie conformal corresponding to Novikov algebra $(V,\circ)$ with a right unit. In the following, we mainly focus on the case when $\beta=0$, i.e. we use Theorem \ref{t1} to determine central extensions of several specific Lie conformal algebras by a one-dimensional center $\mathbb{C}\mathfrak{c}$ up to equivalence.

\begin{example}\label{ex1}
Let $R(\alpha,\beta)=\mathbb{C}[\partial]L\oplus\mathbb{C}[\partial]W$ be a Lie conformal algebra with the $\lambda$-bracket given as follows:
\begin{eqnarray}
[L_\lambda L]=(\partial+2\lambda)L,~~[L_\lambda W]=(\partial+\alpha\lambda+\beta)W,~~[W_\lambda W]=0,
\end{eqnarray}
for some $\alpha$, $\beta\in \mathbb{C}$. In fact, it is the semi-direct sum of $\text{Vir}$ and the conformal module $\mathbb{C}[\partial]W$ of $\text{Vir}$ with the action $L_\lambda(W)=(\partial+\alpha\lambda+\beta)W$. Moreover, it also appeared in \cite{HF}.

Next, we begin to study
central extensions of $R(\alpha,\beta)$ by a one-dimensional center $\mathbb{C}\mathfrak{c}$ up to equivalence.

Obviously, $R(\alpha,\beta)$ is a quadratic Lie conformal algebra corresponding to the 2-dimensional Gel'fand-Dorfman bialgebra
$V=\mathbb{C}L\oplus\mathbb{C}W$ with the Novikov operation ``$\circ$" and Lie bracket defined as follows:
\begin{eqnarray}
&&L\circ L=L,~~~L\circ W=(\alpha-1)W,~~~W\circ L=W,~~~W\circ W=0,\\
&&[L,L]=0,~~~[W,L]=\beta W,~~~[W,W]=0.
\end{eqnarray}

It is easy to see that $(V, \circ, [\cdot,\cdot])$ satisfies the assumption in Corollary \ref{co1}. Therefore, according to Corollary \ref{co1}, (\ref{eqqq1})-(\ref{eqq7}) and by some simple computations, we can obtain\\
(1) If $\alpha\neq2$, $\alpha\neq 1$, $\alpha\neq 0$, or $\alpha=1$, $\beta\neq 0$ or $\alpha=2$, $\beta\neq 0$,
\begin{eqnarray*}
&&\alpha_3(L,L)=A,~~\alpha_1(L,L)=B,~~\alpha_1(L,W)=C,~~\alpha_0(L,W)=\frac{\beta}{\alpha}C,\\
&&\alpha_3(L,W)=\alpha_3(W,W)=\alpha_2(L,W)=\alpha_2(L,L)=\alpha_2(W,W)=0,\\
&&\alpha_1(W,W)=\alpha_0(W,W)=\alpha_0(L,L)=0,
\end{eqnarray*}
for any $A$, $B$, $C\in \mathbb{C}$;\\
(2) If  $\alpha=0$ and $\beta=0$,
\begin{eqnarray*}
&&\alpha_3(L,L)=A,~~\alpha_1(L,L)=B,~~\alpha_1(L,W)=C,~~\alpha_0(L,W)=D,\\
&&\alpha_3(L,W)=\alpha_3(W,W)=\alpha_2(L,W)=\alpha_2(L,L)=\alpha_2(W,W)=0,\\
&&\alpha_1(W,W)=\alpha_0(W,W)=\alpha_0(L,L)=0,
\end{eqnarray*}
for any $A$, $B$, $C$, $D\in \mathbb{C}$;\\
(3) If  $\alpha=0$ and $\beta\neq 0$,
\begin{eqnarray*}
&&\alpha_3(L,L)=A,~~\alpha_1(L,L)=B,~~\alpha_0(L,W)=C,\\
&&\alpha_3(L,W)=\alpha_3(W,W)=\alpha_2(L,W)=\alpha_2(L,L)=\alpha_2(W,W)=0,\\
&&\alpha_1(L,W)=\alpha_1(W,W)=\alpha_0(W,W)=\alpha_0(L,L)=0,
\end{eqnarray*}
for any $A$, $B$, $C\in \mathbb{C}$;\\
(4) If  $\alpha=1$, $\beta=0$,
\begin{eqnarray*}
&&\alpha_3(L,L)=A,~~\alpha_2(L,W)=B,~~\alpha_1(L,L)=C,~~\alpha_1(L,W)=D,~~\alpha_1(W,W)=E,\\
&&\alpha_3(L,W)=\alpha_3(W,W)=\alpha_2(W,W)=\alpha_2(L,L)=0,\\
&&\alpha_0(W,W)=\alpha_0(L,L)=\alpha_0(L,W)=0,
\end{eqnarray*}
for any $A$, $B$, $C$, $D$, $E\in \mathbb{C}$;\\
(5) If  $\alpha=2$, $\beta=0$,
\begin{eqnarray*}
&&\alpha_3(L,L)=A,~~\alpha_3(L,W)=B,~~\alpha_1(L,L)=C,~~\alpha_1(L,W)=D,~~\alpha_1(W,W)=0,\\
&&\alpha_3(W,W)=\alpha_2(L,W)=\alpha_2(W,W)=\alpha_2(L,L)=0,\\
&&\alpha_0(W,W)=\alpha_0(L,W)=\alpha_0(L,L)=0,
\end{eqnarray*}
for any $A$, $B$, $C$, $D\in \mathbb{C}$.

Therefore, if $\alpha\neq2$, $\alpha\neq 1$, $\alpha\neq 0$, or $\alpha=1$, $\beta\neq 0$ or $\alpha=2$, $\beta\neq 0$,
by choosing the linear map $\varphi:V\rightarrow\mathbb{C}$ in Corollary \ref{co1} defined by
$\varphi(L)=\frac{B}{2}$ and $\varphi(W)=\frac{C}{\alpha}$, we can make $\alpha_1(\cdot,\cdot)$ and $\alpha_0(\cdot,\cdot)$ be zero up to equivalence. Therefore, by Corollary \ref{co1}, all equivalence classes of central extensions of
$R(\alpha,\beta)$ by a one-dimensional center $\mathbb{C}\mathfrak{c}$ are $\widetilde{R(\alpha,\beta)}(A)$  with the $\lambda$-brackets as follows:
\begin{eqnarray*}
&&[L_\lambda L]=(\partial+2\lambda)L+A\lambda^3 \mathfrak{c},\\
&&[L_\lambda W]=(\partial+\alpha\lambda+\beta)W,~~[W_\lambda W]=0,
\end{eqnarray*}
for all $A\in \mathbb{C}$. Note that if $A_1\neq A_2$, then $\widetilde{R(\alpha,\beta)}(A_1)$ is not equivalent to $\widetilde{R(\alpha,\beta)}(A_2)$. Therefore, in this case, $\text{dim}~H^2(R(\alpha,\beta),\mathbb{C}\mathfrak{c})=1$.
\\
If $\alpha=0$, and $\beta=0$, by choosing the linear map $\varphi:V\rightarrow\mathbb{C}$ in Corollary \ref{co1} defined by
$\varphi(L)=\frac{B}{2}$ and $\varphi(W)=0$, we can make $\alpha_1(L,L)$ be zero up to equivalence. Therefore, by Corollary \ref{co1}, all equivalence classes of central extensions of
$R(0,0)$ by a one-dimensional center $\mathbb{C}\mathfrak{c}$ are $\widetilde{R(0,0)}(A,C,D)$  with the $\lambda$-brackets as follows:
\begin{eqnarray*}
&&[L_\lambda L]=(\partial+2\lambda)L+A\lambda^3\mathfrak{c},\\
&&[L_\lambda W]=\partial W+(C\lambda+D)\mathfrak{c},~~[W_\lambda W]=0,
\end{eqnarray*}
for all $A$, $C$, $D\in \mathbb{C}$. Note that if $(A_1, C_1, D_1)\neq (A_2, C_2, D_2)$, then $\widetilde{R(0,0)}(A_1, C_1, D_1)$ is not equivalent to $\widetilde{R(0,0)}(A_2, C_2, D_2)$. Therefore, $\text{dim} ~H^2(R(0,0),\mathbb{C}\mathfrak{c})=3$.\\
If $\alpha=0$, and $\beta\neq0$, by choosing the linear map $\varphi:V\rightarrow\mathbb{C}$ in Corollary \ref{co1} defined by
$\varphi(L)=\frac{B}{2}$ and $\varphi(W)=\frac{C}{\beta}$, we can make $\alpha_1(\cdot,\cdot)$ and $\alpha_0(\cdot,\cdot)$ be zero up to equivalence.
Therefore, by Corollary \ref{co1}, all equivalence classes of central extensions of
$R(0,\beta)$ by a one-dimensional center $\mathbb{C}\mathfrak{c}$ are $\widetilde{R(0,\beta)}(A)$  with the $\lambda$-brackets as follows:
\begin{eqnarray*}
&&[L_\lambda L]=(\partial+2\lambda)L+A\lambda^3\mathfrak{c},\\
&&[L_\lambda W]=(\partial+\beta)W,~~[W_\lambda W]=0,
\end{eqnarray*}
for all $A\in \mathbb{C}$. Note that if $A_1\neq A_2$, then $\widetilde{R(0,\beta)}(A_1)$ is not equivalent to $\widetilde{R(0,\beta)}(A_2)$. Therefore, in this case, $\text{dim}~H^2(R(0,\beta),\mathbb{C}\mathfrak{c})=1$.\\
If $\alpha=1$ and $\beta=0$, by choosing the linear map $\varphi: V\rightarrow\mathbb{C}$ in Corollary \ref{co1} defined by
$\varphi(L)=\frac{C}{2}$ and $\varphi(W)=D$, we can make $\alpha_1(L,L)$ and $\alpha_1(L,W)$ be zero up to equivalence. Therefore, by Corollary \ref{co1}, all equivalence classes of central extensions of
$R(1,0)$ by a one-dimensional center $\mathbb{C}\mathfrak{c}$ are $\widetilde{R(1,0)}(A,B,E)$  with the $\lambda$-brackets as follows:
\begin{eqnarray*}
&&[L_\lambda L]=(\partial+2\lambda)L+A\lambda^3\mathfrak{c},\\
&&[L_\lambda W]=(\partial+\lambda)W+B\lambda^2\mathfrak{c},~~[W_\lambda W]=E\lambda\mathfrak{c},
\end{eqnarray*}
for all $A$, $B$, $E\in \mathbb{C}$. Note that if $(A_1, B_1, E_1)\neq (A_2, B_2, E_2)$, then $\widetilde{R(1,0)}(A_1, B_1, E_1)$ is not equivalent to $\widetilde{R(1,0)}(A_2, B_2, E_2)$. Therefore, $\text{dim} ~H^2(R(1,0),\mathbb{C}\mathfrak{c})=3$.\\
If $\alpha=2$ and $\beta=0$, by choosing the linear map $\varphi:V\rightarrow\mathbb{C}$ in Corollary \ref{co1} defined by
$\varphi(L)=\frac{C}{2}$ and $\varphi(W)=\frac{D}{2}$, we can make $\alpha_1(L,L)$ and $\alpha_1(L,W)$ be zero up to equivalence. Therefore, by Corollary \ref{co1}, all equivalence classes of central extensions of
$R(2,0)$ by a one-dimensional center $\mathbb{C}\mathfrak{c}$ are $\widetilde{R(2,0)}(A,B)$  with the $\lambda$-brackets as follows:
\begin{eqnarray*}
&&[L_\lambda L]=(\partial+2\lambda)L+A\lambda^3\mathfrak{c},\\
&&[L_\lambda W]=(\partial+2\lambda)W+B\lambda^3\mathfrak{c},~~[W_\lambda W]=0,
\end{eqnarray*}
for all $A$, $B\in \mathbb{C}$. Note that if $(A_1, B_1)\neq (A_2, B_2)$, then $\widetilde{R(2,0)}(A_1, B_1)$ is not equivalent to $\widetilde{R(2,0)}(A_2, B_2)$. Therefore,  $\text{dim} ~H^2(R(2,0),\mathbb{C}\mathfrak{c})=2$.\\
\end{example}

\begin{example}\label{l1}
The loop Virasoro Lie conformal algebra $\mathcal{LW}$ introduced in \cite{WCY} is a Lie conformal algebra with the $\mathbb{C}[\partial]$-basis $\{L_i|i\in\mathbb{Z}\}$ and the $\lambda$-bracket given as follows:
\begin{eqnarray}
[{L_i}_\lambda L_j]=(-\partial-2\lambda)L_{i+j}.
\end{eqnarray}
Its coefficient algebra is just the loop Virasoro Lie algebra. In \cite{WCY},  conformal derivations, rank
one conformal modules and $\mathbb{Z}$-graded free intermediate series modules of $\mathcal{LW}$ were determined. But, central extensions of
$\mathcal{LW}$ were not studied. Next, we will give a characterization of central extensions of
$\mathcal{LW}$ by a one-dimensional center $\mathbb{C}\mathfrak{c}$ up to equivalence.

Obviously, $\mathcal{LW}$ is a quadratic Lie conformal algebra corresponding to the infinite-dimensional Novikov algebra $V=\bigoplus_i \mathbb{C}L_i$ with the operation ``$\circ$" as follows:
\begin{eqnarray}
L_i\circ L_j=-L_{i+j},~~~~\text{for any $i$, $j\in \mathbb{Z}$.}
\end{eqnarray}

According to Corollary \ref{c1}, (\ref{eqqq1}), (\ref{eqqe1}), (\ref{eq1})-(\ref{eq3}) and by some simple computations, we can obtain
\begin{eqnarray}
\label{eqq1}&&\alpha_3(L_i,L_{j+k})=\alpha_3(L_{i+k},L_{j}),~~~\alpha_3(L_i,L_j)=\alpha_3(L_j,L_i),\\
&&\alpha_2(L_i,L_j)=0,\\
\label{eqq2}&&\alpha_1(L_i,L_{j+k})=\alpha_1(L_{i+j},L_{k}),~~~\alpha_1(L_i,L_j)=\alpha_1(L_j,L_i),\\
\label{eqq3}&&2\alpha_0(L_{i+k},L_j)=\alpha_0(L_{i+j},L_k)+\alpha_0(L_{j+k},L_i),~~\alpha_0(L_i,L_j)=-\alpha_0(L_j,L_i),
\end{eqnarray}
for any $i$, $j$, $k\in \mathbb{Z}$.
By (\ref{eqq1}), we get $\alpha_3(L_i,L_{j+k})=\alpha_3(L_{i+j+k},L_{0})=\alpha_3(L_{0},L_{i+j+k})$. Therefore, we set
\begin{eqnarray}
\label{eqq4}\alpha_3(L_{0},L_{i+j+k})=f(i+j+k),
\end{eqnarray} for some complex function $f$. Thus, $\alpha_3(L_i,L_{j+k})=f(i+j+k)$. Letting $k=0$, we obtain
$\alpha_3(L_i,L_j)=f(i+j)$. Then, (\ref{eqq1}) holds for any $i$, $j$, $k\in \mathbb{Z}$.  Similarly, $\alpha_1(L_i,L_j)=g(i+j)$ for some complex function $g$.
Set $j=0$ in (\ref{eqq3}). We have $2\alpha_0(L_{i+k},L_0)=\alpha_0(L_{i},L_k)+\alpha_0(L_{k},L_i)=0$ for any $i$, $k\in \mathbb{Z}$.
Therefore, $\alpha_0(L_i,L_0)=\alpha_0(L_0,L_i)=0$. Then, set $k=0$ in (\ref{eqq3}). we get $$2\alpha_0(L_{i},L_j)=\alpha_0(L_{i+j},L_0)+\alpha_0(L_{j},L_i)=\alpha_0(L_{j},L_i)=-\alpha_0(L_{i},L_j).$$
Thus, $\alpha_0(L_{i},L_j)=0$.

Therefore, by choosing the linear map $\varphi: V\rightarrow\mathbb{C}$ in Corollary \ref{c1} defined by
$\varphi(L_i)=-\frac{g(i)}{2}$ for all $i\in \mathbb{Z}$, we can make $\alpha_1(\cdot,\cdot)$ be zero up to equivalence. Therefore, by Corollary \ref{c1}, all equivalence classes of central extensions of $\mathcal{LW}$ by a one-dimensional center $\mathbb{C}\mathfrak{c}$ are $\widetilde{\mathcal{LW}}(f)=\mathcal{LW}\oplus \mathbb{C}\mathfrak{c}$ with the $\lambda$-brackets as follows:
\begin{eqnarray}
&&[{L_i}_{\lambda}{L_j}]=(-\partial-2\lambda)L_{i+j}+f(i+j)\lambda^3\mathfrak{c},
\end{eqnarray}
for all complex functions $f$. Note that if $f_1\neq f_2$, then $\widetilde{\mathcal{LW}}(f_1)$ is not equivalent to $\widetilde{\mathcal{LW}}(f_2)$. Therefore, $\text{dim}~H^2(\mathcal{LW},\mathbb{C}\mathfrak{c})=\infty$.
\end{example}

\section{Conformal derivations of quadratic Lie conformal algebras}
In this section, we will investigate conformal derivations of quadratic Lie conformal algebras.

\begin{definition}
A \emph{conformal linear map} between $\mathbb{C}[\partial]$-modules $U$ and $V$ is a linear map
$\phi_{\lambda}: U\rightarrow V[\lambda]$ such that
\begin{eqnarray}
\phi_\lambda(\partial u)=(\partial+\lambda)\phi_\lambda u, ~~\text{for~~all~~$u\in U$.}
\end{eqnarray}
\end{definition}
We will abuse the notation by writing $\phi: U\rightarrow V$ any time it is clear from the context
that $\phi$ is conformal linear.

\begin{definition}
Let $R$ be a Lie conformal algebra. A conformal linear map $d: R\rightarrow R$ is
called a \emph{conformal derivation} of $R$ if
\begin{eqnarray}\label{d1}
d_\lambda[a_\mu b]=[(d_\lambda a)_{\lambda+\mu} b]+[a_\mu(d_\lambda b)], ~~\text{ $a$, $b\in R$.}
\end{eqnarray}
\end{definition}

The space of all conformal derivations of $R$ is denoted by $\text{CDer}(R)$. For any $a\in R$, there is a natural conformal derivation $\text{ad}~a: R\rightarrow R$ such that $$(\text{ad} ~a)_\lambda b=[a_\lambda b],~~~b\in R.$$
All conformal derivations of this kind are called \emph{inner}. The space of all inner conformal derivations is denoted by
$\text{CInn}(R)$.

Next, we present a characterization of conformal derivations of quadratic Lie conformal algebras.
\begin{theorem}\label{t2}
Let $R=\mathbb{C}[\partial]V$ be a quadratic Lie conformal algebra corresponding to the Gel'fand-Dorfman bialgebra $(V, \circ, [\cdot,\cdot])$ and $d$ be a conformal derivation of $R$.  Assume that \begin{eqnarray}\label{d2}
d_\lambda (a)=\sum_{i=0}^n\partial^id_\lambda^i(a),~~~\text{$d_\lambda^i(a)\in V[\lambda]$, $a\in V$, $i\in \{0,1,\cdots,n\}$},\end{eqnarray}
and there exists some $a\in V$ such that $d_\lambda^n(a)\neq 0$.
Then we obtain (for any $a$, $b\in V$)\\
(1) If $n> 3$, then $d_\lambda^n(b)\circ a=a\circ d_\lambda^n(b)=0$;\\
(2) If $n\leq 3$,
\begin{gather}
\label{so1}d_\lambda^3(b\circ a)=d_\lambda^3(a\circ b)=d_\lambda^3(b)\circ a,\\
\label{pw2}b\circ d_\lambda^3(a)=a\circ d_\lambda^3(b),\\
\label{pw3}2d_\lambda^3(b)\circ a+a\circ d_\lambda^3(b)=0,\\
\label{op1}\lambda d_\lambda^3(b\circ a)+d_\lambda^2(b\circ a)+d_\lambda^3[b,a]=[d_\lambda^3(b),a]+d_\lambda^2(b)\circ a,\\
\label{op2}d_\lambda^2(a\ast b)=2 d_\lambda^2(b)\circ a+d_\lambda^2(b)\ast a+3[d_\lambda^3(b),a],\\
\label{op3}-3\lambda b\circ d_\lambda^3(a)+b\circ d_\lambda^2(a)+d_\lambda^2(b)\circ a+2 d_\lambda^2(b)\ast a+3[d_\lambda^3(b),a]=0,\\
\label{op4}-4\lambda d_\lambda^3(a)\ast b+d_\lambda^2(a)\ast b-[b,d_\lambda^3(a)]+d_\lambda^2(b)\ast a
+[d_\lambda^3(b),a]=0,\\
\label{op5}\lambda d_\lambda^2(b\circ a)+d_\lambda^1(b\circ a)+d_\lambda^2[b,a]=d_\lambda^1(b)\circ a+[d_\lambda^2(b),a],\\
\label{op6}d_\lambda^1(a\ast b)=\sum_{i=1}^3(-1)^iC_{i}^1\lambda^{i-1}b\circ d_\lambda^i(a)+d_\lambda^1(b)\circ a+d_\lambda^1(b)\ast a
+2[d_\lambda^2(b),a],\\
\label{op7}\sum_{i=1}^3(-1)^iC_{i+1}^2\lambda^{i-1} d_\lambda^i(a)\ast b+\sum_{i=2}^3(-1)^iC_i^2\lambda^{i-2}[b,d_\lambda^i(a)]
+d_\lambda^1(b)\ast a+[d_\lambda^2(b),a]=0,\\
\label{op8}\lambda d_\lambda^1(b\circ a)+ d_\lambda^0(b\circ a)+d_\lambda^1[b,a]
=\sum_{i=0}^3(-1)^i\lambda^i b\circ d_\lambda^i(a)+d_\lambda^0(b)\circ a+[d_\lambda^1(b),a],\\
\label{op9}d_\lambda^0(a\ast b)=\sum_{i=0}^3(-1)^i C_{i+1}^1\lambda^i d_\lambda^i(a)\ast b+\sum_{i=1}^3(-1)^iC_i^1\lambda^{i-1}[b,d_\lambda^i(a)]+d_\lambda^0(b)\ast a
+[d_\lambda^1(b),a],\\
\label{so15}\lambda d_\lambda^0(b\circ a)+d_\lambda^0[b,a]
=\sum_{i=0}^3(-1)^i\lambda^{i+1}d_\lambda^i(a)\ast b+\sum_{i=0}^3(-1)^i\lambda^i[b,d_\lambda^i(a)]+[d_\lambda^0(b),a].
\end{gather}

\end{theorem}
\begin{proof}

Since $d$ is a conformal derivation of $R$, by (\ref{d1}) and (\ref{d2}),
we obtain
\begin{gather}
\label{g1}d_\lambda[a_\mu b]=d_\lambda(\partial(b\circ a)+\mu(a\ast b)+[b,a])\\
=(\lambda+\partial)d_\lambda(b\circ a)+\mu d_\lambda(a\ast b)+d_\lambda[b,a],\nonumber\\
=(\lambda+\partial)\sum_{i=0}^n\partial^id_\lambda^i(b\circ a)+\mu\sum_{i=0}^n\partial^id_\lambda^i(a\ast b)+\sum_{i=0}^n\partial^id_\lambda^i[b,a],\nonumber\end{gather}
and
\begin{gather}
\label{g2}[(d_\lambda a)_{\lambda+\mu}b]+[a_\mu(d_\lambda b)]=[(\sum_{i=0}^n\partial^id_\lambda^i(a))_{\lambda+\mu} b]+[a_\mu(\sum_{i=0}^n\partial^id_\lambda^i(b))]\\
=\sum_{i=0}^n(-\lambda-\mu)^i[d_\lambda^i(a)_{\lambda+\mu}b]+\sum_{i=0}^n(\mu+\partial)^i[a_\mu d_\lambda^i(b)]\nonumber\end{gather}
\begin{gather}=\sum_{i=0}^n(-\lambda-\mu)^i(\partial(b\circ d_\lambda^i(a))+(\lambda+\mu)d_\lambda^i(a)\ast b+[b,d_\lambda^i(a)])\nonumber\\
+\sum_{i=0}^n(\mu+\partial)^i(\partial(d_\lambda^i(b)\circ a)+\mu d_\lambda^i(b)\ast a+[d_\lambda^i(b),a]).\nonumber
\end{gather}

If $n> 3$, comparing the coefficients of $\mu^{n-1}\partial^2$, $\mu^2\partial^{n-1}$ in (\ref{g1}) and (\ref{g2}), we obtain the following equalities
\begin{eqnarray}
\label{rt1} nd_\lambda^n(b)\circ a+C_n^2d_\lambda^n(b)\ast a=0,~~~C_n^2d_\lambda^n(b)\circ a+nd_\lambda^n(b)\ast a=0.
\end{eqnarray}
Since $n>3$, by (\ref{rt1}), we get $d_\lambda^n(b)\circ a=d_\lambda^n(b)\ast a=0$ for any $a$, $b\in V$. Since $d_\lambda^n(b)\ast a=d_\lambda^n(b)\circ a+a\circ d_\lambda^n(b)$, we also have $a\circ d_\lambda^n(b)=0$.

If $n\leq 3$, assume
\begin{eqnarray}
\label{dd5}d_\lambda(a)=\sum_{i=0}^3\partial^i d_\lambda^i(a),~~\text{for~any~$a\in V$.}
\end{eqnarray} Taking (\ref{dd5}) into (\ref{g1}) and (\ref{g2}) and by comparing the coefficients of
$\partial^4$, $\partial^3\mu$, $\partial^2\mu^2$, $\partial\mu^3$, $\mu^4$, we can get
\begin{gather}
\label{soo1}d_\lambda^3(b\circ a)=d_\lambda^3(b)\circ a,\\
\label{so2}d_\lambda^3(a\ast b)=3d_\lambda^3(b)\circ a+d_\lambda^3(b)\ast a,\\
\label{so3}d_\lambda^3(b)\circ a+d_\lambda^3(b)\ast a=0,\\
\label{so4}-b\circ d_\lambda^3(a)+d_\lambda^3(b)\circ a+3 d_\lambda^3(b)\ast a=0,\\
\label{so5}-d_\lambda^3(a)\ast b+d_\lambda^3(b)\ast a=0.
\end{gather}
It is easy to check that (\ref{so1})-(\ref{pw3}) are equivalent to (\ref{soo1})-(\ref{so5}).
Similarly, by comparing the coefficients of $\partial^3$, $\mu\partial^2$,  $\mu^2\partial$, $\mu^3$, $\partial^2$, $\mu\partial$, $\mu^2$, $\partial$,$\mu$ and $\mu^0\partial^0$, we can immediately obtain
the equalities (\ref{op1})-(\ref{so15}).

By now, the proof is finished.
\end{proof}
\begin{remark}
By the definition of quadratic Lie conformal algebra, if there exists some nonzero $d^i$ in Theorem \ref{t2} for
$i\geq 2$, then there are non-inner conformal derivations of this Lie conformal algebra.
\end{remark}
\begin{corollary}\label{coo1}
Let $R=\mathbb{C}[\partial]V$ be a finite quadratic Lie conformal algebra corresponding to the Gel'fand-Dorfman bialgebra $(V,\circ,[\cdot,\cdot])$. Then we have\\
(1) If $(V,\circ)$ is a simple Novikov algebra, then any conformal derivation $d$ of
$R$ is of the form $d_\lambda(a)=\sum_{i=0}^3\partial^id_\lambda^i(a)$ for any $a\in V$, where $d_\lambda^i(a)\in V[\lambda]$ for $i\in \{0,1,2,3\}$ satisfy (\ref{so1})-(\ref{so15});\\
(2) If for $(V,\circ)$, there exists $x\in V$ such that $b\circ x=kb$ for any $b\in V$ and some fixed $k\in \mathbb{C}\backslash\{0\}$, then any conformal derivation $d$ of
$R$ is of the form $d_\lambda(a)=d_\lambda^0(a)+\partial d_\lambda^1(a)$ for any $a\in V$, where $d_\lambda^0(a)$, $d_\lambda^1(a)\in V[\lambda]$ and they satisfy \begin{gather}
\label{g3}d_\lambda^1(b\circ a)=d_\lambda^1(b)\circ a,\\
\label{g4}d_\lambda^1(a)\ast b=d_\lambda^1(b)\ast a,\\
\label{g5}d_\lambda^0(b\circ a)+\lambda d_\lambda^1(b\circ a)+d_\lambda^1[b,a]=b\circ d_\lambda^0(a)\\
-\lambda b\circ d_\lambda^1(a)+d_\lambda^0(b)\circ a
+[d_\lambda^1(b),a]\nonumber,\\
\label{g7}\lambda d_\lambda^0(b\circ a)+d_\lambda^0[b,a]=\lambda d_\lambda^0(a)\ast b-\lambda^2d_\lambda^1(a)\ast b\\+[b,d_\lambda^0(a)]-\lambda[b,d_\lambda^1(a)]+[d_\lambda^0(b),a]
\nonumber,\end{gather}
for any $a$, $b\in V$. Moreover, if $(V,\circ)$ also satisfies that there exists some $y\in V$, $y\circ V=V$, then any conformal derivation $d$ of
$R$ is of the form
$d=D+\widetilde{d^0}$ where $D$ is an inner conformal derivation and $\widetilde{d^0}$ satisfies
\begin{eqnarray}
\label{cr1}\widetilde{d}_\lambda^0(b\circ a)=b\circ \widetilde{d}_\lambda^0(a)+\widetilde{d}_\lambda^0(b)\circ a,\\
\label{cr2}\lambda \widetilde{d}_\lambda^0(b\circ a)+\widetilde{d}_\lambda^0[b,a]=\lambda \widetilde{d}_\lambda^0(a)\ast b+[b,\widetilde{d}_\lambda^0(a)]+[\widetilde{d}_\lambda^0(b),a],
\end{eqnarray}
for any $a$, $b\in V$. \\
(3) If for $(V,\circ)$, there exists $x\in V$ such that $x\circ b=kb$ for any $b\in V$ and some fixed $k\in \mathbb{C}\backslash\{0\}$, then any conformal derivation $d$ of
$R$ is of the form
$d=D+\widetilde{d^0}$ where $D$ is an inner conformal derivation and $\widetilde{d^0}$ satisfies (\ref{cr1}) and (\ref{cr2}).
\end{corollary}
\begin{proof}
Since $R=\mathbb{C}[\partial]V$ is a finitely generated and free $\mathbb{C}[\partial]$-module, we can assume that
$d_\lambda(a)=\sum_{i=0}^n\partial^i d_\lambda^i(a)$ for any $a\in V$ and some non-negative integer $n$. If $n>3$, by Theorem \ref{t2} and the condition that $V$ is a simple Novikov algebra or there exists $x\in V$ such that $x\circ y=ky$ or $y\circ x=ky$ for any $y\in V$ and $k\in \mathbb{C}\backslash\{0\}$, we can get $d_\lambda^n(b)=0$ for any $b\in V$. Therefore, for any $a\in V$, we can assume that $d_\lambda(a)=\sum_{i=0}^3\partial^i d_\lambda^i(a)$. By Theorem \ref{t2}, (1) can be directly obtained.

Next, we prove (2). According to the above discussion, we get $d_\lambda(a)=\sum_{i=0}^3\partial^i d_\lambda^i(a)$ for any $a\in V$. By Theorem \ref{t2}, (\ref{so1})-(\ref{so15}) hold. Setting $a=b=x$ in (\ref{op5}), we get
$k\lambda d_\lambda^2(x)=[d_\lambda^2(x),x]$. Since $k\neq 0$, we can obtain $d_\lambda^2(x)=0$. Letting $a=b=x$ in (\ref{op2})
we can get $[d_\lambda^3(x),x]=0$. Then it is easy to get that $x\circ d_\lambda^3(x)=0$ from (\ref{op3}). Therefore,
setting $a=b=x$ in (\ref{op4}), one can obtain $d_\lambda^3(x)\circ x=kd_\lambda^3(x)=0$. Consequently, $d_\lambda^3(x)=0$.
Letting $a=x$ in (\ref{pw2}), we get $x\circ d_\lambda^3(b)=0$ for any $b\in V$. Setting $a=x$ in (\ref{pw3}), one can directly obtain $d_\lambda^3(b)=0$ for any $b\in V$. Therefore, we get $d_\lambda(a)=\sum_{i=0}^2\partial^i d_\lambda^i(a)$ for any $a\in V$. Then by Theorem \ref{t2}, we get
\begin{eqnarray}
\label{po1}d_\lambda^2(b\circ a)=d_\lambda^2(b)\circ a,\\
\label{po2}d_\lambda^2(a\ast b)=2 d_\lambda^2(b)\circ a+d_\lambda^2(b)\ast a,\\
\label{po3}b\circ d_\lambda^2(a)+d_\lambda^2(b)\circ a+2 d_\lambda^2(b)\ast a=0,\\
\label{po4}d_\lambda^2(a)\ast b+d_\lambda^2(b)\ast a=0.
\end{eqnarray} Letting $a=x$ in (\ref{po4}), we can have
$d_\lambda^2(b)\ast x=0$. Then according to (\ref{po3}) with $a=x$, $kd_\lambda^2(b)=0$. Therefore, $d_\lambda^2(b)=0$ for any
$b\in V$. Therefore, $d_\lambda(a)=d_\lambda^0(a)+\partial d_\lambda^1(a)$ for any
$a\in V$. By Theorem \ref{t2}, we get that $d^0$ and $d^1$ satisfy (\ref{g3})-(\ref{g7}). Set $d_\lambda^1(y)=\sum_{i=0}^m\lambda^iy_i$ where $y_i\in V$ for $i\in\{0, 1, 2, \cdots, m\}$.  If
$y\circ V=V$, then there exist some $b_i$ such that $y\circ b_i=y_i$ for all $i$. It is easy to see that $(d-\text{ad}(\sum_{i=0}^m(-\partial)^ib_i)_\lambda(y)\in V[\lambda]$. Therefore, we can assume that $d^1_\lambda(y)=0$. Then letting $b=y$ in (\ref{g3}) and by $y\circ V=V$, we can get $d^1=0$. Therefore, $d= D+\widetilde{d}^0$ where $D$ is an inner conformal derivation. Obviously, $\widetilde{d}^0$ is a conformal derivation and $\widetilde{d}^0_\lambda(a)\in V[\lambda]$ for  any $a\in V$. Therefore, by (\ref{g5}) and (\ref{g7}), $\widetilde{d}^0$ satisfies (\ref{cr1}) and (\ref{cr2}).

Finally, we prove (3). Similarly, we can set $d_\lambda(a)=\sum_{i=0}^3\partial^i d_\lambda^i(a)$ for any $a\in V$.
Letting $a=x$ in (\ref{so1}) and (\ref{pw3}), we get $d_\lambda^3(b)\circ x=kd_\lambda^3(b)$ and
$2d_\lambda^3(b)\circ x+kd_\lambda^3(b)=0$. Since $k\neq 0$, we get $d_\lambda^3(b)=0$ for any $b\in V$.
By (\ref{po1}), from (\ref{po2}), we can get
\begin{eqnarray}
\label{po5}d_\lambda^2(a\circ b)=2 d_\lambda^2(b)\circ a+a\circ d_\lambda^2(b).
\end{eqnarray}
Setting $a=x$ in (\ref{po5}), we get $d_\lambda^2(b)\circ x=0$. Then letting $a=x$ in (\ref{po3}),
we obtain
\begin{eqnarray}
\label{po6}b\circ d_\lambda^2(x)+2k d_\lambda^2(b)=0,~~~~~~~ \text{for any $b\in V$.}
\end{eqnarray}
Letting $b=x$ in (\ref{po6}), we get $3kd_\lambda^2(x)=0$. Therefore, $d_\lambda^2(x)=0$. Then by (\ref{po6}), we get
$d_\lambda^2(b)=0$ for any $b\in V$. Therefore, $d_\lambda(a)=d_\lambda^0(a)+\partial d_\lambda^1(a)$ for any
$a\in V$. Then $d^0$ and $d^1$ satisfy (\ref{g3})-(\ref{g7}). Since $x\circ V=V$, with a similar discussion as that in the proof of (2), we may assume that $d^1=0$. Therefore, $d= D+\widetilde{d}^0$, where $D$ is an inner conformal derivation and $\widetilde{d}^0$ satisfies (\ref{cr1}) and (\ref{cr2}).

By now, the proof is finished.
\end{proof}

\begin{corollary}\label{coo2}
Let $R=\mathbb{C}[\partial]V$ be a quadratic Lie conformal algebra corresponding to the Novikov algebra $(V, \circ)$.
Then we have\\
(1) If $V$ is a simple Novikov algebra, then any conformal derivation $d$ of $R$ must be of the following form: $d_\lambda (a)=\sum_{i=0}^3\partial^i d_\lambda^i(a)$ for any $a\in V$, where $d_\lambda^i(a)\in V[\lambda]$ for
$i\in\{0,1,2,3\}$ satisfy (\ref{so1})-(\ref{so15}) with $[\cdot,\cdot]$ trivial.\\
(2) If for $(V,\circ)$, there exists $x\in V$ such that $b\circ x=kb$ for any $b\in V$ and some fixed $k\in \mathbb{C}\backslash\{0\}$, then any conformal derivation $d$ of
$R$ is of the form $d_\lambda(a)=d_\lambda^0(a)+\partial d_\lambda^1(a)$ for any $a\in V$, where $d_\lambda^0(a)$, $d_\lambda^1(a)\in V[\lambda]$ and they satisfy (\ref{g3}), (\ref{g4}) and \begin{gather}
\label{gg5}d_\lambda^0(b\circ a)+\lambda d_\lambda^1(b\circ a)=b\circ d_\lambda^0(a)-\lambda b\circ d_\lambda^1(a)+d_\lambda^0(b)\circ a,\\
\label{gg7}d_\lambda^0(b\circ a)=d_\lambda^0(a)\ast b-\lambda d_\lambda^1(a)\ast b,\end{gather}
for any $a$, $b\in V$.  Moreover, if $(V,\circ)$ also satisfies that there exists some $y\in V$, $y\circ V=V$, then any conformal derivation $d$ of
$R$ is of the form
$d=D+\widetilde{d^0}$ where $D$ is an inner conformal derivation and $\widetilde{d^0}$ satisfies
\begin{eqnarray}
\label{ddd1}\widetilde{d}_\lambda^0(b\circ a)=b\circ \widetilde{d}_\lambda^0(a)+\widetilde{d}_\lambda^0(b)\circ a,\\
\label{ddd2}\widetilde{d}_\lambda^0(b)\circ a=\widetilde{d}_\lambda^0(a)\circ b,
\end{eqnarray}
for all $a$, $b\in V$.\\
(3) If  for $(V,\circ)$, there exists $x\in V$ such that $x\circ b=kb$ for any $b\in V$ and some fixed $k\in \mathbb{C}\backslash\{0\}$, then any conformal derivation $d$ of
$R$ is of the form
$d=D+\widetilde{d}^0$ where $D$ is an inner conformal derivation and $\widetilde{d}^0$ satisfies
(\ref{ddd1}) and (\ref{ddd2}). Moreover, if the element $x$ also satisfies that for any $b\in V$ and the same number $k$, $b\circ x=kb$, then $\text{CDer}(R)=\text{CInn}(R)$.
\end{corollary}
\begin{proof}
For any $a\in V$, set
$d_\lambda (a)=\sum_{i=0}^{n_a}\partial^id_\lambda^i(a),$ where $d_\lambda^i(a)\in V[\lambda]$
for  $i\in \{0,1,\cdots,n_a\}$, and $n_{a}$ is a non-negative integer depending on $a$.
With the same process as in Theorem \ref{t2}, (\ref{d1}) becomes
\begin{gather}
\label{nn1}(\lambda+\partial)d_\lambda(b\circ a)+\mu d_\lambda(a\ast b)=[(d_\lambda a)_{\lambda+\mu}b]+[a_\mu(d_\lambda b)].
\end{gather}
For fixed $a$, $b$, there are four elements of $V$ under the actions of $d_\lambda$ in (\ref{nn1}). Therefore, we may assume the degrees of $\partial$ in $d_\lambda(b\circ a)$, $d_\lambda(a\ast b)$, $d_\lambda a$ and $d_\lambda b$ in (\ref{nn1}) are smaller than some non-negative integer. So, we set $d_\lambda (b\circ a)=\sum_{i=0}^{n}\partial^id_\lambda^i(b\circ a)$,
 $\cdots$ and
$d_\lambda (b)=\sum_{i=0}^{n}\partial^id_\lambda^i(b)$. Obviously, $n$ depends on $a$ and $b$.

Taking them into (\ref{nn1}), we get
\begin{gather}
(\lambda+\partial)\sum_{i=0}^n\partial^i d_\lambda^i(b\circ a)+\mu\sum_{i=0}^n \partial^i d_\lambda^i(a\ast b)\nonumber\\
=\sum_{i=0}^n(-\lambda-\mu)^i(\partial(b\circ d_\lambda^i(a))+(\lambda+\mu)d_\lambda^i(a)\ast b)\nonumber\\
\label{nn2}+\sum_{i=0}^n(\mu+\partial)^i(\partial(d_\lambda^i(b)\circ a)+\mu d_\lambda^i(b)\ast a).
\end{gather}
If $n>3$, by comparing the coefficients of $\mu^{n-1}\partial^2$ and $\mu^2\partial^{n-1}$ in (\ref{nn2}), we get
\begin{eqnarray*}
nd_\lambda^n(b)\circ a+C_n^2d_\lambda^n(b)\ast a=0,~~~C_n^2d_\lambda^n(b)\circ a+nd_\lambda^n(b)\ast a=0.
\end{eqnarray*}
Therefore, $d_\lambda^n(b)\circ a=0$ and $d_\lambda^n(b)\ast a=0$. Repeating this process, we can get
$d_\lambda^m(b)\circ a=d_\lambda^m(b)\ast a=0$ for all $n\geq m> 3$.

By the discussion above, for any $a$, $b\in V$, we get $d_\lambda^m(b)\circ a=a\circ d_\lambda^m(b)=0$ for all $m>3$. By the condition that $V$ is a simple Novikov algebra or there exists $x\in V$ such that $x\circ y=ky$ or $y\circ x=ky$ for any $y\in V$ and $k\in \mathbb{C}\backslash\{0\}$, we get $d_\lambda^m(b)=0$ for any $b\in V$ and $m>3$. Therefore, we can assume that
$d_\lambda (a)=\sum_{i=0}^{3}\partial^id_\lambda^i(a),$ for any $a\in V$. Then (1), (2) and the first part of (3) can be directly obtained from Corollary \ref{coo1}.

Finally, we prove the second part of (3). By the first part of (3), we only need to determine $\widetilde{d}^0$. For computing
$\widetilde{d}^0$, we only need to compute the operator $T: V\rightarrow V$ such that
\begin{eqnarray}
\label{crr1}T(b\circ a)=b\circ T(a)+T(b)\circ a,\\
\label{crr2}T(b)\circ a=T(a)\circ b,
\end{eqnarray}
for any $a$, $b\in V$.  Letting $a=b=x$ in (\ref{crr1}), we can directly obtain that $T(x)=0$. Setting $a=x$ in (\ref{crr2}), we get that $T(b)=0$ for any $b\in V$. Therefore, $\widetilde{d}^0=0$. Thus, all conformal derivations of $R$ are inner.
\end{proof}

\begin{remark}
Note that this corollary also holds when $V$ is infinite-dimensional. Moreover, by (3) in Corollary \ref{coo2}, all conformal derivations of the quadratic Lie conformal algebra corresponding to the Novikov algebra with a unit are inner.
\end{remark}

\begin{remark}
It should be pointed out that when $\mathfrak{g}$ is a finite-dimensional Lie algebra,
every conformal derivation $d$ of $\text{Cur}\mathfrak{g}$ is of the form $d_\lambda(a)=p(\lambda)(\partial+\lambda)a+ \widetilde{d}_\lambda(a)$, where $\widetilde{d}$ is an inner conformal derivation and $p(\lambda)\in \mathbb{C}[\lambda]$. This characterization can be referred to \cite{DK1}.
\end{remark}

Finally, we will use the above results to study conformal derivations of some specific Lie conformal algebras.
\begin{example}
It is known that $Vir$ is the quadratic Lie conformal algebra corresponding to $(V=\mathbb{C}L,\circ)$ where $L\circ L=L$. Obviously, $L$ is a unit of $V$. By (3) in Corollary \ref{coo2}, $\text{CDer}(Vir)=\text{CInn}(Vir)$. This result can also be found in \cite{DK1}.
\end{example}

\begin{example}
 The corresponding Novikov algebra of loop Virasoro conformal algebra $\mathcal{LW}=\bigoplus_{i\in \mathbb{Z}}\mathbb{C}[\partial]L_i$ introduced in Example \ref{l1}
is $V=\bigoplus_{i\in \mathbb{Z}}\mathbb{C}L_i$ with the product
$L_i\circ L_j=-L_{i+j}$ for any $i$, $j\in \mathbb{Z}$. Since $L_0\circ L_j=-L_j$ and $L_j\circ L_0=-L_j$ for any $j\in \mathbb{Z}$, by (3) in Corollary \ref{coo2}, $\text{CDer}(\mathcal{LW})=\text{CInn}(\mathcal{LW})$. This is a result in \cite{WCY}.
\end{example}

\begin{example}
Let $R(\alpha,\beta)$ be the Lie conformal algebra given in Example \ref{ex1}. Next, let us consider conformal derivations of $R(\alpha,\beta)$.

Since $L\circ L=L$, $W\circ L=W$, by Corollary \ref{coo1}, we get $n\leq 1$. Therefore, by Corollary \ref{coo1},
we only need to determine $d^0$ and $d^1$ satisfying (\ref{g3})-(\ref{g7}).
According to that $L\circ L=L $ and $L\circ W=(\alpha-1)W$, by (2) in Corollary \ref{coo1}, we can discuss it in two cases, i.e.
$\alpha\neq 1$ and $\alpha=1$.

When $\alpha\neq 1$, we have $L\circ R(\alpha,\beta)=R(\alpha,\beta)$. Then by (2) in Corollary \ref{coo1}, we only need to determine $\widetilde{d}^0$ satisfying (\ref{cr1}) and (\ref{cr2}). By (\ref{cr1}), we first study the operator $T: V\rightarrow V$ satisfying $T(b\circ a)=b\circ T(a)+T(b)\circ a$. It is easy to check that $T$ is of the form : $T(L)=0$ and $T(W)=c_2W$ for some $c_2\in \mathbb{C}$. Therefore, $\widetilde{d}_\lambda^0(L)=0$ and $\widetilde{d}_\lambda^0(W)=a(\lambda)W$ for some $a(\lambda)\in \mathbb{C}[\lambda]$.
Replacing $a$, $b$ by $L$, $W$ in (\ref{cr2}), we can easily obtain $a(\lambda)=0$. Therefore, $\widetilde{d}_\lambda^0(L)=0$, $\widetilde{d}_\lambda^0(W)=0$ and (\ref{cr2}) holds for other cases.
So, in this case, $\text{CDer}(R(\alpha,\beta))=\text{CInn}(R(\alpha,\beta)).$

Finally, we consider the case when $\alpha=1$. First, we consider $d^1$. For obtaining $d^1$, by (\ref{g3}) and (\ref{g4}), we only need to study
the operator $T: V\rightarrow V$ satisfying the following equalities:
\begin{eqnarray}
\label{er1}&&T(b\circ a)=T(b)\circ a, \\
\label{er2}&&T(a)\ast b=T(b)\ast a.
\end{eqnarray}
By a simple computation, $T$ is of the following form: $T(L)=a_1L+a_2W$ and $T(W)=a_1W$ for any $a_1$, $a_2\in \mathbb{C}$.
Therefore, $d^1$ is of the form as follows: $d_\lambda^1(L)=A(\lambda)L+B(\lambda)W$, $d_\lambda^1(W)=A(\lambda)W$ for any
$A(\lambda)$, $B(\lambda)\in \mathbb{C}[\lambda]$.
Let $D=d-\text{ad}(A(-\partial)L)$.
Then it is easy to check that $D=D^0+\partial D^1$ where $D_\lambda^i(a)\in \mathbb{C}[V]$ for any
$i\in\{0, 1\}$, $a\in V$, and $D_\lambda^1(L)=B(\lambda)W$, $D_\lambda^1(W)=0$. Next, we begin to determine
$D^0$. Set $D_\lambda^0(L)=E(\lambda)L+F(\lambda)W$. Then replacing $a$, $b$ by $L$, $L$ in (\ref{g5}), we can obtain
$E(\lambda)L+(\beta-\lambda)B(\lambda)W=0$. Therefore, $B(\lambda)=E(\lambda)=0$. So, $D^1=0$. Therefore, as the case that
$\alpha\neq 1$, we first consider the operator $T:V\rightarrow V$ satisfying $T(b\circ a)=b\circ T(a)+T(b)\circ a$. It is easy to check that $T$ is of the form : $T(L)=d_2W$ and $T(W)=e_2W$ for some $d_2$, $e_2\in \mathbb{C}$. Therefore, $D_\lambda^0(L)=F(\lambda)W$ and $D_\lambda^0(W)=G(\lambda)W$ for some $F(\lambda)$, $G(\lambda)\in \mathbb{C}[\lambda]$.
Replacing $a$, $b$ by $L$, $W$ in (\ref{cr2}), we can easily obtain $G(\lambda)=0$. Therefore, $D_\lambda^0(L)=F(\lambda)W$, $D_\lambda^0(W)=0$ and (\ref{cr2}) holds for other cases. If $F(\lambda)=\sum_{i=0}^ma_i(\lambda-\beta)^i$,
then let $\gamma(\lambda)=\sum_{i=0}^{m-1}a_{i+1}(\lambda-\beta)^i$. Then, $D^0= \text{ad}(\gamma(-\partial)W)+Q$,
where $Q_\lambda(L)=a_0W$, $Q_\lambda(W)=0$. Thus, $\text{CDer}(R(1,\beta))=\text{CInn}(R(1,\beta))\oplus M,$ where $M$ is the vector space spanned by $Q$, where $Q_\lambda(L)=W$, $Q_\lambda(W)=0$.

Therefore, by the discussion above, $\text{CDer}(R(\alpha,\beta))=\text{CInn}(R(\alpha,\beta))\oplus M,$ where $M$ is the vector space spanned by $Q$, where $Q_\lambda(L)=\delta_{\alpha,1}W$, $Q_\lambda(W)=0$.
\end{example}

\begin{example}
Let $R=\oplus_{i\geq -1}\mathbb{C}[\partial]L_i$ be a Lie conformal algebra with the following $\lambda$-bracket:
\begin{eqnarray}
[{L_i}_\lambda L_j]=((i+1)\partial+(i+j+2)\lambda)L_{i+j}, ~~~\text{for~~$i$, $j\geq -1$.}
\end{eqnarray}
It is the graded algebra of general conformal algebra $gc_1$ (see \cite{SY1}).

Obviously, it is a quadratic Lie conformal algebra corresponding to the Novikov algebra $(V=\oplus_{i\geq -1} \mathbb{C}L_i, \circ)$ where
\begin{eqnarray}
L_i \circ L_j=(j+1)L_{i+j},~~~\text{for~~$i$, $j\geq -1$.}
\end{eqnarray}
Since $L_i\circ L_0=L_i$ for all $i\geq -1$, and  $L_{-1}\circ V=V$, by (2) in Corollary \ref{coo2},
we only need to determine  $\widetilde{d}^0$ satisfying (\ref{ddd1}) and (\ref{ddd2}).
In fact, by (\ref{ddd1}) and (\ref{ddd2}),
it can be changed into find all operations $T: V\rightarrow V$ satisfying
\begin{eqnarray}\label{T1}
T(a\circ b)=T(a)\circ b+a\circ T(b),~~~T(a)\circ b=T(b)\circ a.
\end{eqnarray}

Let $T: V\rightarrow V$ be an operator satisfying (\ref{T1}). Define $T_i(L_j)=\pi_{i+j}T(L_j)$ where in general $\pi_i$ is the natural projection
from $V$ onto $L_i$. Then $T_i$ is an operator satisfying (\ref{T1}) and $T=\sum_{i\geq -1}T_i$ in
the sense that for any $x\in V$ only finitely many $T_i(x)\neq 0$. Therefore, set $T_c(L_i)=f(i)L_{i+c}$.
Replacing $T$ by $T_c$, $a$ by $L_i$ and $b$ by $L_j$ in (\ref{T1}), and comparing the coefficients of $L_{i+j+c}$,
we obtain
\begin{eqnarray}
&&\label{eq10}(j+1)f(i+j)=f(i)(j+1)+f(j)(j+c+1),\\
&&\label{eq11}f(i)(j+1)=f(j)(i+1), ~~~\text{for~~$i$, $j\geq -1$.}
\end{eqnarray}
By (\ref{eq11}), we can get
 $f(i)=A(i+1)$ for all $i$ and some $A\in \mathbb{C}$. Letting $i=0$ in (\ref{eq10}), we immediately get $c=-1$ or $f(j)=0$ for all $j$. Therefore, $T_c=0$, if $c\neq -1$ and $T_{-1}=A(i+1)L_{i-1}$. Thus, $T(L_i)=A(i+1)L_{i-1}$ for some $A\in \mathbb{C}$.
 So, $d_\lambda^0(L_i)=a(\lambda)(i+1)L_{i-1}$ for any $a(\lambda)\in \mathbb{C}[\lambda]$.
 If $a(\lambda)=\sum_{i=0}^ma_i\lambda^i$, set $b(\lambda)=\sum_{i=0}^{m-1}a_{i+1}\lambda^i$.
 Then, $d=\text{ad}(b(-\partial)L_{-1})+Q$, where $Q_\lambda(L_i)=a_0(i+1)L_{i-1}$ for any $i\geq -1$.

Therefore, by the discussion above, $\text{CDer}(R)=M\oplus \text{CInn}(R),$ where $M$ is the vector space spanned by $Q$, where $Q_\lambda(L_i)=(i+1)L_{i-1}$ for any $i\geq -1$..
\end{example}

\end{document}